\numberwithin{equation}{section}
\numberwithin{figure}{section}
\theoremstyle{plain}
\newtheorem{thm}{\protect\theoremname}[section]
  \theoremstyle{remark}
  \newtheorem*{rem*}{\protect\remarkname}
  \theoremstyle{plain}
  \newtheorem{lem}[thm]{\protect\lemmaname}
  \theoremstyle{plain}
  \newtheorem{cor}[thm]{\protect\corollaryname}
  \theoremstyle{plain}
  \newtheorem{prop}[thm]{\protect\propositionname}
  \theoremstyle{plain}
  \newtheorem*{thm*}{\protect\theoremname}
  \theoremstyle{plain}
  \newtheorem*{prop*}{\protect\propositionname}
  \providecommand{\corollaryname}{Corollary}
  \providecommand{\lemmaname}{Lemma}
  \providecommand{\propositionname}{Proposition}
  \providecommand{\remarkname}{Remark}
  \providecommand{\theoremname}{Theorem}
\providecommand{\theoremname}{Theorem}
\begin{document}
\sloppy

\title[Differentiation operator from model spaces to Bergman spaces]{Differentiation operator 
from model spaces 
\\ to Bergman spaces and Peller type inequalities}

\author{Anton Baranov}

\address{Department of Mathematics and Mechanics, Saint Petersburg State University,
28, Universitetski pr., St. Petersburg, 198504, Russia.}

\email{anton.d.baranov@gmail.com}

\author{Rachid Zarouf}

\address{Aix-Marseille Universit\'e, CNRS, Centrale Marseille, I2M, UMR 7373,
13453 Marseille, France.}

\address{\'ESP\'E (\'Ecole Sup\'erieure du Professorat et de l'\'Education) d'Aix-Marseille,  Aix-Marseille Universit\'e, 32, Rue Eug\`ene Cas 13248 Marseille Cedex 04, France.}

\address{Department of Mathematics and Mechanics, Saint Petersburg State University,
28, Universitetski pr., St. Petersburg, 198504, Russia.}

\email{rachid.zarouf@univ-amu.fr}

\thanks{The work is supported by Russian Science Foundation grant 14-41-00010.}

\subjclass[2000]{Primary 32A36, 26A33; Secondary 26C15, 41A10}

\keywords{Rational function, Peller's inequality, Besov space,
weighted Bergman space, model space, Blaschke product}
\begin{abstract}
Given an inner function $\Theta$ in the unit disc $\mathbb{D}$,
we study the boundedness of the differentiation operator which acts from
the model subspace $K_{\Theta}=\left(\Theta H^{2}\right)^{\perp}$ of
the Hardy space $H^{2},$ equiped with the $BMOA$-norm, to some radial-weighted
Bergman space. As an application, we generalize Peller's inequality
for Besov norms of rational functions $f$ of degree $n\geq1$ having
no poles in the closed unit disc $\overline{\mathbb{D}}$. 
\end{abstract}
\maketitle

\section{Introduction and notations}

A well-known inequality by Vladimir Peller (see inequality \eqref{Peller_estimate} below)
majorizes a Besov norm of any rational function $f$ of degree $n\geq1$
having no poles in the closed unit disc
$\overline{\mathbb{D}}=\left\{ \xi\in \mathbb{C}:\;|\xi|\leq1\right\} $
in terms of its $BMOA$-norm and its degree $n$. The original proof
of Peller is based on his description 
of Hankel operators in the Schatten classes. One of the
aims of this paper is to give a short and direct proof of this inequality and
extend it to more general radial-weighted Bergman norms. Our proof
combines integral representation for the derivative of $f$ 
(which come from the  theory of model spaces) 
and the generalization of a theorem by E.M. Dyn'kin.
The corresponding inequalities are obtained in terms of radial-weighted
Bergman norms of the derivative of finite Blaschke products (of degree
$n=\deg f$), instead of $n$ itself. The finite Blaschke products
in question have the same poles as $f$. The study of radial-weighted
Bergman norms of the derivatives of finite Blaschke products of degree
$n$ and their asymptotic as $n$ tends to $+\infty$ is of independent
interest. A contribution to this topic, which we are going to exploit
here, was given by J.~Arazy, S.D.~Fisher and J.~Peetre.
\medskip

Let $\mathcal{P}_{n}$ be the space of complex analytic polynomials
of degree at most $n$ and let
\[
\mathcal{R}_{n}^{+}=\left\{ \frac{P}{Q}\,:\; P,\, 
Q\in\mathcal{P}_{n},\; Q(\xi)\ne0\text{ for }|\xi|\leq1\right\}
\]
be the set of rational functions of degree at most $n$ with poles
outside of the closed unit disc $\overline{\mathbb{D}}.$
In this paper, we consider the norm of a rational function $f\in\mathcal{R}_{n}^{+}$
in different spaces of analytic functions
in the open unit disc $\mathbb{D}=\left\{ \xi:\;|\xi|<1\right\} $.

\subsection{Some Banach spaces of analytic functions}

We denote by $\mathcal{H}ol(\mathbb{D})$ the space of all holomorphic
functions in $\mathbb{D}.$

\subsubsection{The Besov spaces $B_{p}$}
A function $f\in\mathcal{H}ol(\mathbb{D})$ belongs to the Besov space
$B_{p},$ $1<p < \infty$, if and only if
\[
\left\Vert f\right\Vert _{B_{p}}=\vert f(0)\vert+\left\Vert f\right\Vert _{B_{p}}^{\star}<+\infty,
\]
where $\left\Vert f\right\Vert _{B_{p}}^{\star}$ is the seminorm
defined by
\[
\left\Vert f\right\Vert _{B_{p}}^{\star}=
\left(\int_{\mathbb{D}}(1-|u|^2)^{p-2}
\left|f'(u)\right|^{p}{\rm d}A(u)\right)^{\frac{1}{p}},
\]
$A$ being the normalized area measure on $\mathbb{D}$.

For the case $0<p\le 1$ the definition of the Besov norm requires a modification:
\[
\left\Vert f\right\Vert _{B_{p}} =
\sum_{j=0}^{k-1} |f^{(j)}(0)| + 
\left\Vert f\right\Vert _{B_{p}}^{\star},
\qquad
\left\Vert f\right\Vert _{B_{p}}^{\star} =  
\bigg(\int_{\mathbb{D}}\left|f^{(k)}(u)\right|^p (1-|u|^2)^{pk-2}{\rm d}A(u)
\bigg)^{\frac{1}{p}},
\]
where $k$ is the smallest positive integer such that $pk>1$.
We refer to \cite{Pee,Tri,BeLo} for general properties of Besov spaces.

A function $f\in\mathcal{H}ol(\mathbb{D})$ belongs to the space $B_{\infty}$
(known as the Bloch space) if and only if  
$\left\Vert f\right\Vert _{B_{\infty}}=
\left|f(0)\right|+\sup_{z\in\mathbb{D}}\left|f'(z)\right|\left(1-\left|z\right|\right)<\infty$.

\subsubsection{The radial-weighted Bergman spaces
$A_{p}\left(w\right)$}

The radial-weighted Bergman space $A_{p}\left(w\right)$, $1\leq p<\infty$,
is defined as:
\[
A_{p}\left(w\right)=\left\{ f\in\mathcal{H}ol\left(\mathbb{D}\right):\:\left\Vert f\right\Vert _{A_{p}\left(w\right)}^{p}=\int_{\mathbb{D}}w(\vert u\vert)\left|f(u)\right|^{p}{\rm d}A(u)<\infty\right\} ,
\]
where the weight $w$ satisfies $w\geq0$ and $\int_{0}^{1}w(r)\,{\rm d}r<\infty.$
The classical power weights $w(r)=w_{\alpha}(r)=\left(1-r^{2}\right)^{\alpha}$,
$\alpha>-1$, are of special interest; in this case we 
put $A_{p}(\alpha)=A_{p}\left(w_{\alpha}\right).$
We refer to \cite{HKZ} for general properties of weighted Bergman
spaces.

\subsubsection{The spaces $A_{p}^{1}(\alpha)$}

A function $f\in\mathcal{H}ol(\mathbb{D})$ belongs to the space $A_{p}^{1}(\alpha),$
$1\leq p\leq+\infty$, $\alpha>-1,$ if and only if
\[
\left\Vert f\right\Vert _{A_{p}^{1}(\alpha)}=\vert f(0)\vert+\left\Vert f'\right\Vert _{A_{p}\left(\alpha\right)}<+\infty.
\]
We also define the $A_{p}^{1}(\alpha)$-seminorm by $\left\Vert f\right\Vert _{A_{p}^{1}(\alpha)}^{\star}=\left\Vert f'\right\Vert _{A_{p}\left(\alpha\right)}.$
Note that the spaces $B_{p}$ and $A_{p}^{1}(p-2)$ coincide for $1<p < +\infty.$

\subsubsection{The space BMOA }

There are many ways to define $BMOA$; see \cite[Chapter 6]{Gar}.
For the purposes of this paper we choose the following one: a function
$f\in\mathcal{H}ol(\mathbb{D})$ belongs to the $BMOA$ space (of
analytic functions of bounded mean oscillation) if and only if
\[
\left\Vert f\right\Vert _{BMOA}=\inf\left\Vert g\right\Vert _{L^{\infty}(\mathbb{T})}<+\infty,
\]
where the infimum is taken over all $g\in L^{\infty}(\mathbb{T})$,
$\mathbb{T}=\left\{\xi:\;|\xi|=1\right\}$ being
the unit circle, for which the representation
\[
f(\xi)=\frac{1}{2\pi i}\int_{\mathbb{T}}\frac{g(u)}{u-\xi}{\rm d}u,\qquad\vert\xi\vert<1,
\]
holds. Recall that $BMOA$ is the dual space of the
Hardy space $H^{1}$ under the pairing
\[
\left\langle f,\, g\right\rangle =\int_{\mathbb{T}}f(u)\overline{g(u)}{\rm d}u,\qquad f\in H^{1},\: g\in BMOA,
\]
where this integral must be understood as the extension of the pairing
acting on a dense subclass of $H^{1},$ see \cite[p. 23]{Bae}.

\subsection{\label{def_model_space}Model spaces }

\subsubsection{\label{sub:General-inner-functions}General inner functions}

By $H^{p},\,1\leq p\leq\infty,$ we denote the standard Hardy spaces
(see \cite{Gar,Nik}). Recall that $H^{2}$ is a reproducing kernel
Hilbert space, with the kernel
\[
k_{\lambda}(w)=\frac{1}{1-\overline{\lambda}w},\qquad\lambda,\, w\in\mathbb{D},
\]
 known as the Szeg\"{o} kernel (or the Cauchy kernel) associated with
$\lambda.$ Thus $\left\langle f,\, k_{\lambda}\right\rangle =f(\lambda)$
for all $f\in H^{2}$ and for all $\lambda\in\mathbb{D}$, where
$\left\langle \cdot,\,\cdot\right\rangle $
is the scalar product on $H^{2}.$

Let $\Theta$ be an \textit{inner function}, i.e., $\Theta\in H^{\infty}$
and $\vert\Theta(\xi)|=1$ a.e. $\xi\in\mathbb{T}$.
We define the model subspace $K_{\Theta}$ of the Hardy space $H^{2}$
by
\[
K_{\Theta}=\left(\Theta H^{2}\right)^{\perp}=H^{2}\ominus\Theta H^{2}.
\]
By the famous theorem of Beurling, these and only these subspaces
of $H^{2}$ are invariant with respect to the backward shift operator.
We refer to \cite{Nik} for the general theory of the spaces $K_{\Theta}$
and their numerous applications.

For any \textit{inner function} $\Theta$, the reproducing kernel
of the model space $K_{\Theta}$ corresponding to a point $\xi\in\mathbb{D}$
is of the form
\[
k_{\lambda}^{\Theta}(w)=\frac{1-\overline{\Theta(\lambda)}\Theta(w)}
{1-\overline{\lambda}w},\qquad \lambda,\, w\in\mathbb{D},
\]
that is $\left\langle f,\, k_{\lambda}^{\Theta}\right\rangle =f(\lambda)$
for all $f\in K_{\Theta}$ and for all $\lambda\in\mathbb{D}$.

\subsubsection{\label{sub:def_of_K_B}The case of finite Blaschke products }

From now on, for any $\sigma=(\lambda_{1},\dots,\lambda_{n})\in\mathbb{D}^{n}$,
we consider the finite Blaschke product
\[
B_{\sigma}=\prod_{k=1}^{n}b_{\lambda_{k}},
\]
where $b_{\lambda}(z)=\frac{\lambda-z}{1-\overline{\lambda}z}$, is
the elementary Blaschke factor corresponding to $\lambda\in\mathbb{D}$.
It is well known that if 
\[
\sigma=\{\lambda_{1},...,\lambda_{1},\lambda_{2},...,
\lambda_{2},...,\lambda_{t},...,\lambda_{t}\}\in\mathbb{D}^{n},
\]
where every $\lambda_{s}$ is repeated according to its multiplicity
$n_{s}$, $\sum_{s=1}^{t}n_{s}=n$, then
\[
K_{B_{\sigma}}=H^{2}\ominus B_{\sigma}H^{2}=
\overline{{\rm span}} 
\{ k_{\lambda_{j},\, i}:\,1\leq j\leq t,\,0\leq i\leq n_{j}-1 \},
\]
where for $\lambda\neq0$, $k_{\lambda,\, i}=\left(\frac{d}{d\overline{\lambda}}\right)^{i}k_{\lambda}$
and $k_{\lambda}=\frac{1}{1-\overline{\lambda}z}$ is the standard
Cauchy kernel at the point $\lambda,$ whereas $k_{0,\, i}=z^{i}.$
Thus the subspace $K_{B_{\sigma}}$ consists of rational functions of the
form $p/q$, where $p\in\mathcal{P}_{n-1}$ and $q\in\mathcal{P}_{n}$,
with the poles $1/\overline{\lambda}_{1},\dots,1/\overline{\lambda}_{n}$
of corresponding multiplicities (including possible poles at $\infty$).
Hence, if $f\in\mathcal{R}_{n}^{+}$ and 
$1/\overline{\lambda}_{1},\dots,1/\overline{\lambda}_{n}$
are the poles of $f$ (repeated according to multiplicities), then
$f\in K_{zB_{\sigma}}$ with $\sigma=(\lambda_{1},\dots,\lambda_{n})$.
\medskip

From now on, for two positive functions $a$ and $b$, we say that
$a$ is dominated by $b$, denoted by $a\lesssim b$, if there is
a constant $C>0$ such that $a\leq Cb$; we say that $a$ and
$b$ are comparable, denoted by $a\asymp b$, if both $a\lesssim b$
and $b\lesssim a$.

\medskip


\section{\label{sub:Known-results-and}Main results}

\subsection{\label{sub:Main-ingredients}Main ingredients}

In 1980 V. Peller proved in his seminal paper \cite{Pel1} that
\begin{equation}
\label{Peller_estimate}
\left\Vert f\right\Vert _{B_{p}} \leq c_{p}
n^{\frac{1}{p}}\left\Vert f\right\Vert _{BMOA}
\end{equation}
for any $f\in\mathcal{R}_{n}^{+}$ and $1\leq p\leq+\infty,$ where
$c_{p}$ is a constant depending only on $p.$ Later, this result
was extended to the range $p>0$ independently and with different
proofs by Peller \cite{Pel2}, S. Semmes \cite{Sem} and also by
A. Pekarskii  \cite{pek1} who found a proof which does not use
the theory of Hankel operators (see also \cite{pek2}).
 
The aim of the present article is:
\smallskip

\begin{enumerate}
\item study the boundedness of the differention operator from 
$\left(K_{\Theta},\,\left\Vert \cdot\right\Vert _{BMOA}\right)$
to $A_{p}\left(\alpha\right),$ $1<p < +\infty,$ $\alpha>-1,$ and
\smallskip

\item generalize Peller's result (\ref{Peller_estimate}) replacing the
$B_{p}$-seminorm by the $A_{p}^{1}(\alpha)$-one.
\end{enumerate}
\smallskip
In both of these problems, we make use of a method based on two main
ingredients:
\smallskip
\begin{itemize}
\item integral representation for the derivative of functions in $K_{\Theta}$
or in $\mathcal{R}_{n}^{+}$, and
\smallskip

\item a generalization of a theorem by E.M.  Dyn'kin, see Subsection \ref{Subsec_Dynkin_gen}.
\end{itemize}
\smallskip

One more tool (that we will need in problem (2)) is the estimate
of $B_{p}$-seminorms of finite Blaschke products by Arazy, Fischer
and Peetre \cite{AFP}.


\subsection{Main results}

Let us consider the differentation operator $Df=f'$ and the shift
and the backward shift operators defined respectively by
\begin{equation}
Sf=zf,\qquad S^{\star}f=\frac{f-f(0)}{z},\label{diff_and_shift_op}
\end{equation}
for any $f\in\mathcal{H}ol(\mathbb{D}).$ From now on, for any inner
function $\Theta,$ we put
\[
\widetilde{\Theta}=z\Theta=S\Theta.
\]

\subsubsection{\label{norm_K_Theta}Boundedness of the differentiation operator
from $\left(K_{\Theta},\,\left\Vert \cdot\right\Vert _{BMOA}\right)$
to $A_{p}\left(\alpha\right)$}

Let us first discuss the
boundedness of the operator $D$ from $BMOA$ to $A_{p}\left(\alpha\right)$.
The following (essentially well-known) proposition gives 
necessary and sufficient conditions on $p$ and
$\alpha$ so that a continuous embedding $BMOA\subset A_{p}^{1}\left(\alpha\right)$ hold.

\begin{prop}
\label{prop:embed}
Let $\alpha>-1$ and $1\le p <\infty$. Then
$BMOA\subset A_{p}^{1}\left(\alpha\right)$ if and only if either $\alpha>p-1$
or $\alpha=p-1$ and $p\ge 2$.
\end{prop}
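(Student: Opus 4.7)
The plan is to split the equivalence into sufficiency and necessity using two standard inclusions: $BMOA\subset B_{\infty}$, which gives $(1-|z|^{2})|f'(z)|\lesssim\|f\|_{BMOA}$, and $BMOA\subset H^{2}$, which combined with the Littlewood--Paley identity yields
\[
\int_{\mathbb{D}}|f'(z)|^{2}(1-|z|^{2})\,{\rm d}A(z)\asymp\|f\|_{H^{2}}^{2}-|f(0)|^{2}\lesssim\|f\|_{BMOA}^{2}.
\]

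For \emph{sufficiency} I would handle the two regimes separately. When $\alpha>p-1$, raising the Bloch estimate to the $p$-th power gives
$\|f\|_{A_{p}^{1}(\alpha)}^{\star p}\lesssim \|f\|_{BMOA}^{p}\int_{\mathbb{D}}(1-|z|^{2})^{\alpha-p}\,{\rm d}A$,
and the remaining integral is finite precisely when $\alpha-p>-1$. When $\alpha=p-1$ and $p\ge 2$, I would split the integrand as
\[
|f'(z)|^{p}(1-|z|^{2})^{p-1}=\bigl((1-|z|^{2})|f'(z)|\bigr)^{p-2}\cdot|f'(z)|^{2}(1-|z|^{2}),
\]
bound the first factor uniformly by $\|f\|_{BMOA}^{p-2}$ via the Bloch estimate, and integrate the second factor via the Littlewood--Paley identity above. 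The hypothesis $p\ge 2$ enters exactly here: it is what makes the exponent $p-2$ nonnegative so that the Bloch bound goes in the right direction.

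For \emph{necessity} I would use a single family of lacunary test series $f_{s}(z)=\sum_{k\ge 1}k^{-s}z^{2^{k}}$, $s>1/2$, which lie in $BMOA$ because lacunary series with $\ell^{2}$ coefficients do. Applying Khintchine's inequality to the lacunary series $f_{s}'$ gives
\[
\int_{0}^{2\pi}|f_{s}'(re^{i\theta})|^{p}\,d\theta\asymp\Bigl(\sum_{k\ge 1}k^{-2s}\,4^{k}\,r^{2^{k+1}}\Bigr)^{p/2},
\]
and after estimating the sum by its maximal term around $k\sim\log_{2}(1/(1-r))$, the question reduces to the convergence of $\int_{0}^{1}t^{\alpha-p}(\log(1/t))^{-ps}\,dt$. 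This integral diverges precisely when $\alpha<p-1$, or when $\alpha=p-1$ and $ps\le 1$. Any $s>1/2$ handles the range $\alpha<p-1$, and on the line $\alpha=p-1$ with $1\le p<2$ the interval $1/2<s\le 1/p$ is non-empty and delivers the required counterexample.

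The critical point of the argument is the end-point $\alpha=p-1$ with $p<2$. The obvious test function $\log(1/(1-z))$ will not do: by Forelli--Rudin, $\int(1-|z|^{2})^{\alpha}|1-z|^{-p}\,{\rm d}A<\infty$ as soon as $\alpha>p-2$, so this function sees only the threshold $\alpha=p-2$ and cannot probe the line $\alpha=p-1$. The lacunary construction is what introduces the extra logarithmic factor $(\log(1/t))^{-ps}$, which is precisely the quantity that records the dichotomy between $p<2$ and $p\ge 2$ and mirrors, on the necessity side, the role played by the exponent $p-2$ in the factorization used for sufficiency.
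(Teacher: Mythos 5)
Your proof is correct. The sufficiency half is essentially identical to the paper's: for $\alpha>p-1$ the pointwise Bloch-type bound $(1-|z|^2)|f'(z)|\lesssim\|f\|_{BMOA}$ (which the paper derives via duality with $H^1$) plus integrability of $(1-|z|^2)^{\alpha-p}$, and at the endpoint $\alpha=p-1$, $p\ge 2$, the same factorization $|f'|^p(1-|z|^2)^{p-1}=\bigl((1-|z|^2)|f'|\bigr)^{p-2}\cdot|f'|^2(1-|z|^2)$ combined with the Littlewood--Paley/$H^2$ bound. The necessity half is where you genuinely diverge from the paper. The paper splits it into two citations: for $\alpha<p-1$ it invokes Gluchoff's criterion to produce interpolating Blaschke products $B$ with $B'\notin A_p(\alpha)$, and for $\alpha=p-1$, $1\le p<2$, it invokes Vinogradov's lemma that $f\in A_p^1(p-1)$ forces $\sum_n|\hat f(2^n)|^p<\infty$, so that $A_p^1(p-1)$ misses even some disc-algebra functions. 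You instead build a single explicit family of lacunary series $f_s(z)=\sum_k k^{-s}z^{2^k}$ that covers both regimes at once; this is more self-contained and elementary, at the cost of carrying out the dyadic-shell computation that the paper outsources to the references (and indeed Vinogradov's lemma is, morally, the dual formulation of your lacunary computation at the endpoint). Two small points of hygiene, neither a gap: what you call Khintchine's inequality is really the Paley--Zygmund $L^p\asymp L^2$ equivalence for lacunary power series (Khintchine proper concerns Rademacher sums), and the fact that a lacunary series lies in $BMOA$ exactly when its coefficients are square-summable is standard but deserves a reference (it follows, e.g., from the Carleson-measure characterization of $BMOA$). Your closing observation that $\log(1/(1-z))$ only probes the threshold $\alpha=p-2$ and cannot detect the line $\alpha=p-1$ is accurate and explains why a lacunary (or Blaschke-product) construction is genuinely needed for $p-2<\alpha\le p-1$.
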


Now, we consider an arbitrary \textit{inner function} $\Theta$. Our
first main result gives necessary and sufficient conditions under
which the differentiation operator
\[
D:\:\left(K_{\Theta},\,\left\Vert \cdot\right\Vert _{BMOA}\right)\rightarrow
A_{p}\left(\alpha\right)
\]
is bounded.
When this is the case, we estimate its norm in terms of $\|\Theta'\|_{A_{p}\left(\alpha\right)}$.

\begin{thm}
\label{thm1} 
Let $1<p < \infty$ and $\alpha>-1$.
Then the operator $D:\:\left(K_{\Theta},\,\left\Vert 
\cdot\right\Vert _{BMOA}\right)\rightarrow A_{p}\left(\alpha\right)$
is bounded if and only if $\Theta' \in A_{p}\left(\alpha\right)$.

Moreover, one can distinguish three cases:

$(a)$ If $\alpha>p-1$ or $\alpha = p-1$, and $p\ge 2$ then the operator 
$D:\:\left(K_{\Theta},\,\left\Vert 
\cdot\right\Vert _{BMOA}\right)\rightarrow A_{p}\left(\alpha\right)$
is bounded.

$(b)$ If $p-2<\alpha<p-1$, then the operator $D:\:\left(K_{\Theta},\,\left\Vert 
\cdot\right\Vert _{BMOA}\right)\rightarrow A_{p}\left(\alpha\right)$
is bounded if and only if $\Theta'\in A_{1}\left(\alpha-p+1\right).$

$(c)$ If $\alpha \le p-2$, then the operator 
$D:\:\left(K_{\Theta},\,\left\Vert \cdot\right\Vert _{BMOA}\right)\rightarrow A_{p}\left(\alpha\right)$
is bounded if and only if $\Theta$ is a finite Blaschke product.

In cases $(b)$ and $(c)$, we have
\begin{equation}
\left\Vert D\right\Vert \lesssim\|\Theta'\|_{A_{p}\left(\alpha\right)}\lesssim\left\Vert D\right\Vert +{\rm const},\label{norm_D_K_Theta}
\end{equation}
with constants depending on $p$ and $\alpha$ only. 
\end{thm}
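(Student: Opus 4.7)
The plan is to establish the two-sided norm estimate \eqref{norm_D_K_Theta} (for cases (b) and (c)) together with a uniform upper bound on $\|D\|$ in case (a), from which the ``if and only if'' statements follow. The characterisations of the condition ``$\Theta'\in A_p(\alpha)$'' appearing in (b), as $\Theta'\in A_1(\alpha-p+1)$, and in (c), as $\Theta$ being a finite Blaschke product, are separate facts about inner functions in radial-weighted Bergman spaces that I will invoke at the end.

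Case (a) is immediate from Proposition~\ref{prop:embed}: each $f\in K_\Theta$ lies in $BMOA$ with the same norm, and in this range $BMOA\subset A_p^1(\alpha)$, so $D$ is bounded with operator norm depending only on $p$ and $\alpha$; moreover $\Theta'\in A_p(\alpha)$ automatically since $\Theta\in H^\infty\subset BMOA$. For sufficiency in (b) and (c), I would use that every $f\in K_\Theta$ admits the representation
\[
f(z) \;=\; \frac{1}{2\pi i} \int_{\mathbb{T}} g(u)\,\frac{1-\overline{\Theta(u)}\,\Theta(z)}{u-z}\,du, \qquad \|g\|_{L^\infty(\mathbb{T})} \lesssim \|f\|_{BMOA},
\]
which follows from the $BMOA$-representation of $f$ together with the identity $P_+(g\overline{\Theta})=0$ (a short Fourier computation, valid whenever $f\in K_\Theta$). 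Using the boundary identity $1-\overline{\Theta(u)}\Theta(z)=\overline{\Theta(u)}(\Theta(u)-\Theta(z))$ on $\mathbb{T}$ and differentiating in $z$ yields
\[
f'(z)\;=\;\frac{1}{2\pi i}\int_{\mathbb{T}} g(u)\,\overline{\Theta(u)}\,\frac{\Theta(u)-\Theta(z)}{(u-z)^2}\,du,
\]
a generalized Cauchy integral in which the divided difference $(\Theta(u)-\Theta(z))/(u-z)$ supplies the smoothness associated with $\Theta'$. The generalization of Dyn'kin's theorem in Subsection~\ref{Subsec_Dynkin_gen} is tailored to convert this into the estimate $\|f'\|_{A_p(\alpha)}\lesssim \|\Theta'\|_{A_p(\alpha)}\,\|f\|_{BMOA}$.

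For the necessity, I would test $D$ on $f_0:=S^*\Theta=(\Theta-\Theta(0))/z$. Using the adjoint identity $\langle S^*\Theta,\Theta h\rangle=\langle \Theta,z\Theta h\rangle$ and $|\Theta|=1$ a.e.\ on $\mathbb{T}$, one checks $f_0\in K_\Theta$, and $\|f_0\|_\infty\le 2$ gives $\|f_0\|_{BMOA}\lesssim 1$ uniformly in $\Theta$. Differentiating $\Theta(z)=zf_0(z)+\Theta(0)$ yields $\Theta'(z)=f_0(z)+zf_0'(z)$, whence $|\Theta'(z)|\le 2+|f_0'(z)|$ throughout $\mathbb{D}$. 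Integrating against $(1-|z|^2)^\alpha\,dA$ then produces
\[
\|\Theta'\|_{A_p(\alpha)}\;\lesssim\;\|f_0'\|_{A_p(\alpha)}+1\;\lesssim\;\|D\|\,\|f_0\|_{BMOA}+1\;\lesssim\;\|D\|+1,
\]
which is the lower bound in \eqref{norm_D_K_Theta}.

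The main obstacle will be the sufficiency step for cases (b) and (c), specifically the application of the generalized Dyn'kin theorem to the generalized Cauchy integral for $f'$ above: this is where the threshold $\alpha\le p-2$ of case (c) must appear, because in that range the weight $(1-|z|^2)^\alpha$ is too weak to control the singularity $(u-z)^{-2}$ without the rationality of $\Theta$ (i.e., $\Theta$ being a finite Blaschke product, so that $K_\Theta$ consists of rational functions). In the intermediate band $p-2<\alpha<p-1$ of case (b), the critical balance permits the weaker condition $\Theta'\in A_1(\alpha-p+1)$ to suffice; verifying this equivalence amounts to a Schwarz--Pick calculation using $|\Theta'|\le (1-|z|^2)^{-1}$, which I expect to be routine once the main estimate is in place.
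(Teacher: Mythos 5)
Your overall architecture matches the paper's (embedding for case (a), integral representation plus the generalized Dyn'kin theorem for sufficiency, the test function $S^{\star}\Theta$ for necessity, and known characterizations of inner functions with $\Theta'\in A_p(\alpha)$ for the final reductions), and your necessity argument is actually cleaner than the paper's: writing $\Theta'=f_0+zf_0'$ with $f_0=S^{\star}\Theta$ avoids the annulus truncation the paper uses to handle the division by $z$. However, there is a genuine gap in the sufficiency step. The whole point of the representation of $f'$ is to obtain the pointwise bound $|f'(z)|\le \|f\|_{BMOA}\,\frac{1-|\Theta(z)|^2}{1-|z|^2}$, which is then integrated and fed into the generalized Dyn'kin theorem ($I_{p,\alpha}(\Theta)\lesssim\|\Theta'\|_{A_p(\alpha)}^p$ for $\alpha<p-1$). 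The paper gets this bound from $f'(z)=\langle f,\,z(k_z^{\Theta})^2\rangle$ via $H^1$--$BMOA$ duality, crucially using that $\|z(k_z^{\Theta})^2\|_{H^1}=\|k_z^{\Theta}\|_{H^2}^2=\frac{1-|\Theta(z)|^2}{1-|z|^2}$ -- the \emph{square} of the reproducing kernel is what makes the $H^1$ norm computable and small. Your kernel $\overline{\Theta(u)}\frac{\Theta(u)-\Theta(z)}{(u-z)^2}=\frac{1-\overline{\Theta(u)}\Theta(z)}{(u-z)^2}$ is a valid representing kernel for $f'$ on $K_\Theta$, but it carries the numerator to the first power only, and the natural estimate $|f'(z)|\le\|g\|_{\infty}\int_{\mathbb{T}}\frac{|1-\overline{\Theta(u)}\Theta(z)|}{|u-z|^2}\,|{\rm d}u|$ does not give the required bound: already for $\Theta(z)=z$ the right-hand integral is comparable to $\log\frac{e}{1-|z|}$ while $\frac{1-|\Theta(z)|^2}{1-|z|^2}=1$. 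So the step ``the generalization of Dyn'kin's theorem converts this into $\|f'\|_{A_p(\alpha)}\lesssim\|\Theta'\|_{A_p(\alpha)}\|f\|_{BMOA}$'' does not go through with your kernel; you need to replace it by $z(k_z^\Theta)^2$ (which differs from yours by an element of $\Theta H^2$ plus an anti-analytic part, hence represents the same functional on $K_\Theta$).

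A second, smaller issue concerns the reductions in (b) and (c). The equivalence $\Theta'\in A_p(\alpha)\Leftrightarrow\Theta'\in A_1(\alpha-p+1)$ for $p-2<\alpha<p-1$, and the equivalence with $\Theta$ being a finite Blaschke product for $\alpha<p-2$, are \emph{not} routine Schwarz--Pick calculations: only the implication $\Theta'\in A_1(\alpha-p+1)\Rightarrow\Theta'\in A_p(\alpha)$ follows from $|\Theta'|^{p-1}\le(1-|z|^2)^{-(p-1)}$; the converse implications are the substance of the Ahern--Verbitsky--Gluchoff theorem, which must be cited (as the paper does), not derived. You also leave the borderline $\alpha=p-2$ of case (c) untreated; the paper handles it separately via the Arazy--Fisher--Peetre inequality, since Gluchoff's criterion is stated only for $\alpha<p-2$.
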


\begin{rem*}
1. In the cases $(b)$ and $(c)$, to show that their conditions are equivalent 
to the inclusion $\Theta'\in A_{p}(\alpha)$ we use a theorem by
P.R. Ahern \cite{Ahe1} and its generalizations by I.E.~Verbitsky \cite{Ver}
and A. Gluchoff \cite{Glu}. We do not know whether
the inclusions $\Theta'\in A_{p}(\alpha)$ and $\Theta'\in A_{1}\left(\alpha-p+1\right)$
are equivalent for $\alpha=p-1$ and $ 1 < p<2$.     
\smallskip

2. The membership of Blaschke products in
various function spaces is a well-studied topic. Besides the above-cited
papers by Ahern, Gluchoff and Verbitsky, let us mention the papers
by Ahern and D.N. Clark
\cite{AC1, AC2} and recent works by D. Girela, J. Pel\'aez, D. Vukoti\'c, and A. Aleman
\cite{GPV, AV}.
\end{rem*}

\subsubsection{\label{Subsec_Peller_gen}Generalization of Peller's inequalities}

In the following theorem, we give a generalization of Peller's inequality
\eqref{Peller_estimate}.

\begin{thm}
\label{thm2} Let $f\in\mathcal{R}_{n}^{+},$ $\deg f=n$ and $\sigma\in\mathbb{D}^{n}$
be the set of its poles counting multiplicities \textup(including poles at
$\infty$\textup). For any $\alpha>-1,$ $1<p < \infty$, and $p>1+\alpha$,
we have
\begin{equation}
\left\Vert f\right\Vert _{A_{p}^{1}(\alpha)}^{\star}\leq K_{p,\,\alpha}
\left\Vert f\right\Vert _{BMOA}\|B_{\sigma}'\|_{A_{p}\left(\alpha\right)},
\label{right-hand-side-peller-gen-ineq}
\end{equation}
where $K_{p,\,\alpha}^{p}=\frac{2^{\frac{1}{\alpha+1}}}{2^{\frac{1}{\alpha+1}}-1}
\left(\frac{p}{p-1-\alpha}\right)^{p}2^{p+1}.$ 
\end{thm}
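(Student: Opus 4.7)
The starting point is the observation recalled at the end of Subsection \ref{sub:def_of_K_B}: if $f \in \mathcal{R}_n^+$ has poles $1/\overline{\lambda}_1,\dots,1/\overline{\lambda}_n$, then $f$ lies in the model space $K_{\widetilde{B}_\sigma}$, where $\widetilde{B}_\sigma = z B_\sigma$ is a finite Blaschke product of degree $n+1$. My plan is therefore to apply the method announced in Subsection \ref{sub:Main-ingredients} (integral representation plus the generalized Dyn'kin theorem) with the inner function $\Theta = \widetilde{B}_\sigma$, and then to convert the resulting bound in terms of $\|\widetilde{B}_\sigma'\|_{A_p(\alpha)}$ into one in terms of $\|B_\sigma'\|_{A_p(\alpha)}$.

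The first step is to start from the reproducing-kernel identity
\[
f(z) = \bigl\langle f, k_z^{\widetilde{B}_\sigma}\bigr\rangle_{H^2}
\]
valid for $f \in K_{\widetilde{B}_\sigma}$, and differentiate in $z$ to produce a boundary integral representation of $f'(z)$ whose kernel factors through $\widetilde{B}_\sigma'(z)$ and a Cauchy-type singularity $1/(1-\overline{u}z)$. Plugging this representation into the generalized Dyn'kin theorem (Subsection \ref{Subsec_Dynkin_gen} of the paper) will give a pointwise-to-integral conversion of the form
\[
\int_{\mathbb{D}} |f'(z)|^p (1-|z|^2)^\alpha \, {\rm d}A(z)
\;\le\; C_{p,\alpha}\, \|f\|_{BMOA}^p \int_{\mathbb{D}} |\widetilde{B}_\sigma'(z)|^p (1-|z|^2)^\alpha \, {\rm d}A(z).
\]
This is essentially the content of Theorem \ref{thm1} specialized to the Blaschke case (where case $(a)$ or $(c)$ of that theorem applies, depending on $\alpha$), but here I will keep the constant explicit. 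The assumption $p>1+\alpha$ is exactly what makes the Cauchy-type integrals in the Dyn'kin step converge, and its use will produce the factor $\bigl(\tfrac{p}{p-1-\alpha}\bigr)^p$; the standard dyadic decomposition of $\mathbb{D}$ into annuli $\{|z| \in [1-2^{-k},1-2^{-(k+1)})\}$ that underlies Dyn'kin's argument will contribute the geometric factor $\tfrac{2^{1/(\alpha+1)}}{2^{1/(\alpha+1)}-1}$, and the remaining $2^{p+1}$ comes from collecting triangle-inequality constants along the way. These three contributions combine to exactly $K_{p,\alpha}^p$.

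The final step replaces $\widetilde{B}_\sigma'$ by $B_\sigma'$. Since $\widetilde{B}_\sigma = zB_\sigma$, one has $\widetilde{B}_\sigma' = B_\sigma + z B_\sigma'$ and the pointwise bound $|B_\sigma(z)| \le 1$ yields $|\widetilde{B}_\sigma'(z)| \le 1 + |B_\sigma'(z)|$, hence
\[
\|\widetilde{B}_\sigma'\|_{A_p(\alpha)}^p \;\le\; 2^{p-1}\bigl(\|1\|_{A_p(\alpha)}^p + \|B_\sigma'\|_{A_p(\alpha)}^p\bigr).
\]
The Arazy--Fisher--Peetre estimate on $\|B_\sigma'\|_{A_p(\alpha)}$ for finite Blaschke products provides a lower bound $\|B_\sigma'\|_{A_p(\alpha)} \gtrsim 1$ (already for $n\ge 1$), which is what allows us to absorb the constant term $\|1\|_{A_p(\alpha)}^p$ into $\|B_\sigma'\|_{A_p(\alpha)}^p$ without introducing an additive remainder.

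The main obstacle is \textbf{Step 2}: not its existence (it is essentially Theorem \ref{thm1}) but the \emph{bookkeeping} of constants. The generalized Dyn'kin theorem must be applied in its sharp form, and each triangle inequality, each passage from a boundary integral to an interior dyadic sum, and each use of the Cauchy estimate must be tracked to recover exactly the prefactor $K_{p,\alpha}$ rather than an unspecified constant; in particular one must avoid losing a factor of $n$ or a logarithm that a coarser application of Theorem \ref{thm1} would introduce.
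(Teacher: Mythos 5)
Your overall route is the paper's: place $f$ in $K_{\widetilde{B}_\sigma}$ with $\widetilde{B}_\sigma=zB_\sigma$, get a pointwise bound on $f'$ from an integral representation, convert to $\|\widetilde{B}_\sigma'\|_{A_p(\alpha)}$ by the generalized Dyn'kin theorem, and finally pass to $\|B_\sigma'\|_{A_p(\alpha)}$. Two of your steps, however, do not work as written. First, you cannot get the right kernel by ``differentiating the reproducing-kernel identity in $z$'': the kernel $k_z^{\Theta}(w)$ depends on $z$ only through $\overline{z}$ and $\overline{\Theta(z)}$, and the differentiated kernel contains the term $\overline{\Theta'(z)}\,\Theta(w)k_z(w)$, whose $H^1$-norm is of order $\log\frac{1}{1-|z|}$; taken literally, your representation loses a logarithm. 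The representation actually needed is $f'(u)=\langle f, z(k_u^{\Theta})^2\rangle$ (Lemma \ref{lemma_int_rep_KTheta}, obtained from the Cauchy formula and $f\perp\Theta H^2$), which gives $|f'(u)|\le \|f\|_{BMOA}\,\|k_u^{\Theta}\|_{H^2}^2=\|f\|_{BMOA}\,\frac{1-|\Theta(u)|^2}{1-|u|^2}$ (Proposition \ref{ineq_diff_KTheta_gen}); the derivative $\Theta'$ does not appear in the kernel at all and enters only afterwards, through Theorem \ref{th3} (Hardy's inequality applied to $1-|\Theta(re^{i\theta})|\le\int_r^1|\Theta'(se^{i\theta})|\,{\rm d}s$). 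Relatedly, the factor $\frac{2^{1/(\alpha+1)}}{2^{1/(\alpha+1)}-1}$ comes from Lemma \ref{weighted_bergman_norm} and the median radius $\beta_\alpha$, not from a dyadic decomposition into annuli.

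Second, and more seriously, your absorption of $\|1\|_{A_p(\alpha)}$ into $\|B_\sigma'\|_{A_p(\alpha)}$ rests on the claim $\|B_\sigma'\|_{A_p(\alpha)}\gtrsim 1$, which you attribute to Arazy--Fisher--Peetre. That inequality concerns only $\|B'\|_{A_p(p-2)}$; the lower bound extends to $\alpha\le p-2$ by monotonicity of the weight, but it is \emph{false} for $p-2<\alpha<p-1$, a range allowed by the hypotheses of the theorem. Indeed, for $B_\sigma=b_\lambda$ one has $|b_\lambda'(u)|=\frac{1-|\lambda|^2}{|1-\overline{\lambda}u|^2}$, and \cite[Theorem 1.7]{HKZ} gives $\|b_\lambda'\|_{A_p(\alpha)}^p\asymp (1-|\lambda|^2)^{\alpha+2-p}\to 0$ as $|\lambda|\to1$ when $\alpha>p-2$. (The paper's own last line, $\|\widetilde{B}_\sigma'\|_{A_p(\alpha)}\lesssim\|B_\sigma'\|_{A_p(\alpha)}$, is asserted without proof and meets the same obstruction in this range.) One way to repair the step: use $K_{zB_\sigma}=K_{B_\sigma}\oplus \mathbb{C}B_\sigma$ to write $f=g+cB_\sigma$ with $g\in K_{B_\sigma}$ and $|c|=|\langle f,B_\sigma\rangle|\le\|f\|_{H^2}\lesssim\|f\|_{BMOA}$, hence $\|g\|_{BMOA}\lesssim\|f\|_{BMOA}$; then apply Proposition \ref{ineq_diff_KTheta_gen} and Theorem \ref{th3} with $\Theta=B_\sigma$ to $g$, and estimate $\|cB_\sigma'\|_{A_p(\alpha)}$ directly. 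This yields the stated bound without ever comparing $\|\widetilde{B}_\sigma'\|_{A_p(\alpha)}$ to $\|B_\sigma'\|_{A_p(\alpha)}$.
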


\begin{rem*}
$\,$The inequality \eqref{right-hand-side-peller-gen-ineq} is sharp 
up to a constant in the following sense:  for $f=B_\sigma$ 
we, obviously, have $\|f\|_{A_p^1(\alpha)}^{\star} = \|f\|_{BMOA} 
\|B_\sigma'\|_{A_p(\alpha)}$ (note that $\|B_\sigma\|_{BMOA} =1$). 
\medskip

Let us show how Peller's inequality \eqref{Peller_estimate}
for $1<p<\infty$ follows from Theorem \ref{thm2}. For $\alpha=p-2$, we have
\[
\left\Vert f'\right\Vert _{A_{p}\left(\alpha\right)}=
\left\Vert f\right\Vert _{B_{p}}^{\star},\qquad\|B_{\sigma}'\|_{A_{p}
\left(\alpha\right)}=\|B_{\sigma}\|_{B_{p}}^{\star}.
\]
To deduce Peller's inequalities  it remains
to apply the following theorem by Arazy, Fischer and Peetre \cite{AFP}:
if $1\leq p\leq\infty$, then there exist absolute positive constants
$m_{p}$ and $M_{p}$ such that
\begin{equation}
m_{p}n^{\frac{1}{p}}\leq\| B\|_{B_{p}}^{\star}\leq M_{p}n^{\frac{1}{p}}.
\label{eq:AFP_inequ}
\end{equation}
for any Blaschke product of degree $n$. Then we obtain for $1<p < \infty$,
\[
\left\Vert f\right\Vert _{B_{p}}^{\star}\leq K_{p,\, p-2}^{\frac{1}{p}}M_{p}\left\Vert f\right\Vert _{BMOA}(n+1)^{\frac{1}{p}}\lesssim n^{\frac{1}{p}}\left\Vert f\right\Vert _{BMOA}.
\]

To make the expositions self-contained, we give in Section \ref{peller}
a very simple proof of the upper estimate in 
\eqref{eq:AFP_inequ} (which is slightly different from the proof 
by D. Marshall presented in \cite{AFP}).

The method of integral repesentations for higher order derivatives in model spaces
allows to prove Peller's inequalities also for $0<p\le 1$.  
In Section \ref{pel1} we present the proof for the case $p>\frac{1}{2}$. 
\end{rem*}

\subsubsection{\label{Subsec_Dynkin_gen}Generalization of a theorem by Dyn'kin}

E.M. Dyn'kin proved in \cite[Theorem 3.2]{Dyn} that \textit{
\begin{equation}
\label{Dyn'kin_estimate}
\int_{\mathbb{D}}\left(\frac{1-\vert B(u)\vert^{2}}{1-\left|u\right|^{2}}\right)^{2}{\rm d}A(u)\leq8(n+1),
\end{equation}
for any finite Blaschke product $B$ of degree $n$. }

From now on, for any\textit{ inner function} $\Theta$ and for any
$\alpha>-1,$ $p>1,$ we put                          
\begin{equation}
I_{p,\,\alpha}(\Theta)=\int_{\mathbb{D}}(1-\vert u\vert^{2})^{\alpha}\left(\frac{1-\vert\Theta(u)\vert^{2}}{1-\left|u\right|^{2}}\right)^{p}{\rm d}A(u).\label{def_I_B_gen}
\end{equation}
Dyn'kin's Theorem can be stated as follows: \textit{for any finite
Blaschke product $B$ of degree $n,$ we have
\[
I_{2,\,0}(B)\leq8(n+1).
\]
}

Here, we generalize this result to the case $\alpha>-1,$ $p>1$
and $p>1+\alpha$. This generalization is the key step of 
the proof of Theorem \ref{thm2}.

\begin{thm}
\label{th3} 
Let $1< p <\infty$, $\alpha>-1$ and $p>1+\alpha$.
Then,
\[
\left\Vert \Theta'\right\Vert _{A_{p}\left(\alpha\right)}^{p}\leq I_{p,\,\alpha}(\Theta)\leq K_{p,\,\alpha}\left\Vert \Theta'\right\Vert _{A_{p}\left(\alpha\right)}^{p},
\]
where $K_{p,\,\alpha}$ is the same constant as in Theorem \ref{thm2}.
\end{thm}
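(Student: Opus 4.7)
The lower bound $\|\Theta'\|_{A_p(\alpha)}^p \leq I_{p,\alpha}(\Theta)$ is immediate pointwise: by the Schwarz--Pick inequality, $|\Theta'(u)|(1-|u|^2) \leq 1-|\Theta(u)|^2$ for every $u \in \mathbb{D}$, so after raising to the $p$-th power and multiplying by $(1-|u|^2)^{\alpha-p}$ the bound holds under the integral sign. This only uses that $\Theta$ is a self-map of $\mathbb{D}$.

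For the upper bound, my plan rests on two ingredients. First, a \emph{radial integration identity}: since $\Theta$ is inner, $|\Theta^*|=1$ a.e.\ on $\mathbb{T}$, so the fundamental theorem of calculus gives, for a.e.\ $\theta$,
\[
1 - |\Theta(re^{i\theta})|^2 = \int_r^1 2\,\mathrm{Re}\bigl(\overline{\Theta(se^{i\theta})}\,e^{i\theta}\,\Theta'(se^{i\theta})\bigr)\,ds \;\leq\; 2\int_r^1 |\Theta'(se^{i\theta})|\,ds.
\]
Substituting this bound into the polar form of $I_{p,\alpha}(\Theta) = \int_{\mathbb{D}}(1-|u|^2)^{\alpha-p}(1-|\Theta(u)|^2)^p\,dA(u)$ reduces the task to controlling
\[
\int_0^{2\pi}\!\!\int_0^1 (1-r^2)^{\alpha-p}\Bigl(\int_r^1 |\Theta'(se^{i\theta})|\,ds\Bigr)^{\!p} dr\,d\theta
\]
by $\|\Theta'\|_{A_p(\alpha)}^p$. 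The second ingredient is then the \emph{weighted Hardy inequality}
\[
\int_0^1 (1-r)^{\alpha-p}\Bigl(\int_r^1 f(s)\,ds\Bigr)^{\!p} dr \;\leq\; \Bigl(\frac{p}{p-1-\alpha}\Bigr)^{\!p} \int_0^1 (1-s)^\alpha f(s)^p\,ds,
\]
valid precisely in the regime $\alpha < p-1$ (i.e.\ our hypothesis $p>1+\alpha$) with the sharp constant shown; it follows in one line from integration by parts together with H\"older's inequality. Applied fiberwise in $\theta$ with $f(s)=|\Theta'(se^{i\theta})|$, this delivers the target right-hand side $\|\Theta'\|_{A_p(\alpha)}^p$.

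To recover the precise constant $K_{p,\alpha} = \tfrac{2^{1/(\alpha+1)}}{2^{1/(\alpha+1)}-1}\bigl(\tfrac{p}{p-1-\alpha}\bigr)^p 2^{p+1}$, I would interpose a dyadic decomposition of $\mathbb{D}$ into annuli $R_k = \{u : 1-2^{-k/(\alpha+1)} \leq |u| < 1-2^{-(k+1)/(\alpha+1)}\}$. These thresholds are calibrated so that the weighted mass $\int_{R_k}(1-|u|^2)^\alpha\,dA$ is comparable to $2^{-k-1}$; bounding $1-|\Theta|^2$ on $R_k$ by the radial integral from the first step and summing in $k$ then produces the geometric series $\sum_{k\geq 0}2^{-k/(\alpha+1)} = \tfrac{2^{1/(\alpha+1)}}{2^{1/(\alpha+1)}-1}$. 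The factor $2^{p+1}$ collects the $2^p$ from the fundamental theorem of calculus together with one extra power of $2$ from the weight conversions $(1-|u|^2) \asymp (1-|u|)$ in the relevant range.

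The main obstacle in this plan is not any one of the three ingredients (Schwarz--Pick, sharp Hardy, dyadic summation), but their \emph{exact calibration} against the stated constant. One must orchestrate the order of the radial and angular integrations, match the dyadic threshold $1-2^{-k/(\alpha+1)}$ to the weight exponent $\alpha$ correctly, and keep the weight conversions between $(1-|u|^2)^\gamma$ and $(1-|u|)^\gamma$ (for both $\gamma = \alpha$ and $\gamma = \alpha - p$) sharp enough that no spurious numerical loss appears beyond the factors visible in $K_{p,\alpha}$.
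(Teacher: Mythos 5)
Your lower bound and the first two ingredients of your upper bound coincide with the paper's proof and are correct: Schwarz--Pick gives the left inequality pointwise; the radial estimate $1-|\Theta(re^{i\theta})|^2\le 2\int_r^1|\Theta'(se^{i\theta})|\,{\rm d}s$ is the paper's Lemma \ref{first_estimate_I_B} (the paper reaches the same factor $2$ via $1-|\Theta|^2\le 2(1-|\Theta|)\le 2|\Theta(e^{i\theta})-\Theta(re^{i\theta})|$); and the weighted Hardy inequality with constant $\left(\frac{p}{p-1-\alpha}\right)^p$, applied fiberwise in $\theta$, is exactly Lemma \ref{Hardy_ineq} and Corollary \ref{cor:We-suppose-that}. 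Together these account for the factor $2^p\left(\frac{p}{p-1-\alpha}\right)^p$ in $K_{p,\alpha}$.

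The gap is in the assertion that the Hardy step ``delivers the target right-hand side $\|\Theta'\|_{A_p(\alpha)}^p$.'' It does not: it delivers $\int_0^{2\pi}\int_0^1(1-r)^{\alpha}|\Theta'(re^{i\theta})|^p\,{\rm d}r\,\frac{{\rm d}\theta}{\pi}$, which differs from $\|\Theta'\|_{A_p(\alpha)}^p=\int_0^1 r\,w_\alpha(r)M(r)\,{\rm d}r$ (where $M(r)=\int_0^{2\pi}|\Theta'(re^{i\theta})|^p\frac{{\rm d}\theta}{\pi}$) by the missing Jacobian factor $r$; this is a real issue near $r=0$ and is not a weight conversion between $(1-|u|^2)^\gamma$ and $(1-|u|)^\gamma$. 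Your dyadic decomposition is calibrated to reproduce the numerology of $K_{p,\alpha}$ rather than to repair this defect, and attributing the extra factor $2$ to such weight conversions is not justified (nor would it yield exactly $2$ uniformly in $\alpha$). The paper closes the step with one observation (Lemma \ref{weighted_bergman_norm}): $M$ is nondecreasing, so with $\beta_\alpha$ the median of $w_\alpha$, i.e. $\int_0^{\beta_\alpha}w_\alpha=\int_{\beta_\alpha}^1 w_\alpha$, whence $\beta_\alpha=1-2^{-1/(\alpha+1)}$, one gets $\int_0^1 w_\alpha M\le 2\int_{\beta_\alpha}^1 w_\alpha M\le\frac{2}{\beta_\alpha}\int_{\beta_\alpha}^1 r\,w_\alpha M\le\frac{2}{\beta_\alpha}\|\Theta'\|_{A_p(\alpha)}^p$. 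The resulting factor $\frac{2}{\beta_\alpha}=2\cdot\frac{2^{1/(\alpha+1)}}{2^{1/(\alpha+1)}-1}$ is precisely what completes $K_{p,\alpha}$; note that your geometric series $\sum_{k\ge 0}2^{-k/(\alpha+1)}$ equals $1/\beta_\alpha$, so a single split at the median already does the work of the whole dyadic family. With this substitution your argument becomes the paper's proof.
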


The paper is organized as follows.
We first focus in Section \ref{Gen-Dynk-Th}
on the generalization of Dyn'kin's result. In Section \ref{pr21},
Proposition \ref{prop:embed} and Theorem \ref{thm1} are proved, while
Section \ref{peller}
is devoted to the proof
of Peller type inequalities (Theorem \ref{thm2}).
The case $\frac{1}{2} <p \le 1$ in Peller's inequality is considered
in Section \ref{pel1}.
In Section \ref{sec:Radial-weighted-Bergman-norms},
we discuss some estimates of radial-weighted Bergman norms
of Blaschke products.
Finally, in Section \ref{dolg} we discuss some related inequalities
by Dolzhenko for which we give a very simple proof for the case $1\le p\le 2$ based
on Dyn'kin's estimate and suggest a way to extend
these inequalities to the range $p>2$.
\medskip


\section{\label{Gen-Dynk-Th}Generalization of Dyn'kin's Theorem}

The aim of this Section is to prove Theorem \ref{th3}. The lower
bound follows trivially from the Schwarz--Pick
inequality applied to $\Theta$. The main
ideas for the proof of the upper bound come from \cite[Theorem 3.2]{Dyn}. 
In this Section, $\Theta$ is an arbitrary inner function.

\begin{lem}
\label{first_estimate_I_B} For $p>1,$ $\alpha>-1$ and $p>1+\alpha,$
we have
\[
I_{p,\,\alpha}(\Theta)\leq2^{p}\int_{0}^{2\pi}\int_{0}^{1}(1-r)^{\alpha}\left(\frac{1}{1-r}\int_{r}^{1}\vert\Theta'(se^{i\theta})\vert{\rm d}s\right)^{p}{\rm d}r\frac{{\rm d}\theta}{\pi}.
\]
\end{lem}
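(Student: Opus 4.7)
The plan is to reduce the claimed bound to a pointwise inequality along radial rays, which is the essence of Dyn'kin's approach. I would proceed in three steps.

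First, I would pass to polar coordinates $u=re^{i\theta}$ with $dA(u)=\frac{1}{\pi}r\,dr\,d\theta$ and rewrite the integrand of $I_{p,\alpha}(\Theta)$ as
\[
(1-|u|^2)^{\alpha}\left(\frac{1-|\Theta(u)|^2}{1-|u|^2}\right)^{p}=\frac{(1-|\Theta(u)|^2)^{p}}{(1-r^2)^{p-\alpha}}.
\]
Since the hypothesis $p>1+\alpha$ yields $p-\alpha>0$, and since $1-r^2=(1-r)(1+r)\geq 1-r$, we get $(1-r^2)^{p-\alpha}\geq(1-r)^{p-\alpha}$, so the integrand is bounded above by $(1-r)^{\alpha}\bigl(\tfrac{1-|\Theta(re^{i\theta})|^2}{1-r}\bigr)^{p}$. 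This is the only place where the hypothesis $p>1+\alpha$ plays a role.

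Second, I would establish the central pointwise estimate: for a.e.\ $\theta$ and every $r\in[0,1)$,
\[
1-|\Theta(re^{i\theta})|^{2}\leq 2\int_{r}^{1}|\Theta'(se^{i\theta})|\,ds.
\]
Because $\Theta$ is inner, by Fatou's theorem its non-tangential boundary values satisfy $|\Theta(e^{i\theta})|=1$ for a.e.\ $\theta$. For such $\theta$, the holomorphic function $s\mapsto \Theta(se^{i\theta})$ on $[r,1)$ together with monotone convergence gives $|\Theta(e^{i\theta})-\Theta(re^{i\theta})|\leq\int_{r}^{1}|\Theta'(se^{i\theta})|\,ds$ (if the right side is infinite the claim is vacuous). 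The reverse triangle inequality yields $1-|\Theta(re^{i\theta})|=|\Theta(e^{i\theta})|-|\Theta(re^{i\theta})|\leq |\Theta(e^{i\theta})-\Theta(re^{i\theta})|$, and the factor of $2$ comes from $1-|\Theta|^{2}=(1-|\Theta|)(1+|\Theta|)\leq 2(1-|\Theta|)$.

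Third, I would plug this bound back, obtaining that the integrand is dominated by
\[
2^{p}(1-r)^{\alpha}\left(\frac{1}{1-r}\int_{r}^{1}|\Theta'(se^{i\theta})|\,ds\right)^{p},
\]
and then use the trivial estimate $r\leq 1$ to discard the Jacobian factor in the area element. This produces the stated inequality.

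The main obstacle, such as it is, lies in the pointwise estimate of step two: one must invoke both the inner-function hypothesis $|\Theta(e^{i\theta})|=1$ a.e.\ and a careful application of the fundamental theorem of calculus along radial rays. Everything else is elementary manipulation of the factors $(1-r^2)$ versus $(1-r)$ and a change of variables.
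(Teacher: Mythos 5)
Your proposal is correct and follows essentially the same route as the paper's proof: polar coordinates, the bound $1-|\Theta|^2\le 2(1-|\Theta|)$, the a.e.\ identity $|\Theta(e^{i\theta})|=1$ combined with the fundamental theorem of calculus along radial rays to get $1-|\Theta(re^{i\theta})|\le\int_r^1|\Theta'(se^{i\theta})|\,ds$, the comparison $(1-r^2)^{\alpha-p}\le(1-r)^{\alpha-p}$, and discarding the Jacobian factor $r\le1$. The only cosmetic difference is that you organize the estimate pointwise before integrating, while the paper carries the $\theta$-integral through each step; the content is identical.
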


\begin{proof}
Writing the integral $I_{p,\,\alpha}(\Theta)$ in polar coordinates,
and using the fact that
\[
1-\vert\Theta(u)\vert^{2}\leq2(1-\vert\Theta(u)\vert),
\]
we obtain
\begin{eqnarray*}
I_{p,\,\alpha}(\Theta) & \leq & 2^{p}\int_{0}^{1}r(1-r^{2})^{\alpha-p}\left(\int_{0}^{2\pi}(1-\vert\Theta(re^{i\theta})\vert)^{p}\frac{{\rm d}\theta}{\pi}\right){\rm d}r\\
 & \leq & 2^{p}\int_{0}^{1}r(1-r)^{\alpha-p}\left(\int_{0}^{2\pi}\vert\Theta(e^{i\theta})-\Theta(re^{i\theta})\vert^{p}\frac{{\rm d}\theta}{\pi}\right){\rm d}r\\
 & \leq & 2^{p}\int_{0}^{1}r(1-r)^{\alpha-p}\left(\int_{0}^{2\pi}\left(\int_{r}^{1}\vert\Theta'(se^{i\theta})\vert{\rm d}s\right)^{p}\frac{{\rm d}\theta}{\pi}\right){\rm d}r\\
 & \leq & 2^{p}\int_{0}^{2\pi}\int_{0}^{1}(1-r)^{\alpha}\frac{1}{(1-r)^{p}}\left(\int_{r}^{1}\vert\Theta'(se^{i\theta})\vert{\rm d}s\right)^{p}{\rm d}r\frac{{\rm d}\theta}{\pi},
\end{eqnarray*}
which completes the proof of the lemma.
\end{proof}
We recall now a general version of the Hardy inequality, see \cite[page 245]{HLP},
which after change of variables gives (as in \cite[Lemma 7]{Ahe2}):

\begin{lem}
\label{Hardy_ineq} If $h:(0,\,1)\rightarrow\left[0,+\infty)\right.$,
$p>1,$ $\alpha>-1$ and $p>1+\alpha,$ then
\[
\int_{0}^{1}(1-r)^{\alpha}\left(\frac{1}{1-r}\int_{r}^{1}h(s){\rm d}s\right)^{p}{\rm d}r\leq\left(\frac{p}{p-1-\alpha}\right)^{p}\int_{0}^{1}(1-r)^{\alpha}h(r)^{p}{\rm d}r.
\]
\end{lem}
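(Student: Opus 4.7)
The plan is to reduce the stated inequality to the classical weighted Hardy inequality on $(0,\infty)$ via a simple change of variables, exactly as the authors suggest (``after change of variables''). The main obstacle is purely bookkeeping: matching the parameters and handling the truncation to $(0,1)$.

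First, I would perform the substitution $u=1-r$, $v=1-s$ in both sides. Setting $g(v):=h(1-v)$, the left-hand side becomes
\[
\int_{0}^{1} u^{\alpha-p}\left(\int_{0}^{u} g(v)\,{\rm d}v\right)^{p}{\rm d}u,
\]
while the right-hand side becomes
\[
\left(\frac{p}{p-1-\alpha}\right)^{p}\int_{0}^{1} u^{\alpha}\,g(u)^{p}\,{\rm d}u.
\]
Introducing $s:=p-1-\alpha$, the hypothesis $p>1+\alpha$ gives $s>0$, and the exponents read $\alpha-p=-s-1$ and $\alpha=p-s-1$. So the claim is equivalent to the weighted Hardy inequality
\[
\int_{0}^{1} u^{-s-1}\left(\int_{0}^{u} g(v)\,{\rm d}v\right)^{p}{\rm d}u \leq \left(\frac{p}{s}\right)^{p}\int_{0}^{1} u^{p-s-1}\,g(u)^{p}\,{\rm d}u.
\]

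Second, I would extend $g$ by zero to $(0,\infty)$ and invoke the classical Hardy inequality in the form found in Hardy--Littlewood--P\'olya (see \cite[p.~245]{HLP}): for every $p>1$, every $s>0$ and every non-negative measurable $G$ on $(0,\infty)$,
\[
\int_{0}^{\infty} u^{-s-1}\left(\int_{0}^{u} G(v)\,{\rm d}v\right)^{p}{\rm d}u \leq \left(\frac{p}{s}\right)^{p}\int_{0}^{\infty} u^{p-s-1}\,G(u)^{p}\,{\rm d}u.
\]
Applied to $G=g\cdot\chi_{(0,1)}$, the right-hand side reduces to the integral over $(0,1)$, and the left-hand side on $(0,1)$ is dominated by the left-hand side on $(0,\infty)$, giving the weighted Hardy inequality above.

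Finally, reversing the substitution $u=1-r$ recovers the statement of the lemma with the advertised constant $\bigl(p/(p-1-\alpha)\bigr)^{p}$. The only non-routine point is checking that the trivial truncation step (passing from $(0,\infty)$ to $(0,1)$) preserves the inequality, but this is immediate since all integrands are non-negative and $G$ is supported in $(0,1)$.
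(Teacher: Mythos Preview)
Your proposal is correct and follows exactly the approach indicated in the paper: the authors do not give a detailed proof but simply recall the classical weighted Hardy inequality from \cite[p.~245]{HLP} and state that the lemma follows ``after change of variables'' (as in \cite[Lemma 7]{Ahe2}). Your substitution $u=1-r$, $v=1-s$ and the truncation-by-zero argument spell out precisely this reduction, with the parameters matching correctly.
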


\begin{cor}
\label{cor:We-suppose-that} Let $p>1,$ $\alpha>-1$ and
$p>1+\alpha.$ Then,
\[
I_{p,\,\alpha}(\Theta)\leq C_{p,\,\alpha}\int_{0}^{2\pi}\int_{0}^{1}(1-r)^{\alpha}\vert\Theta'(re^{i\theta})\vert^{p}{\rm d}r\frac{{\rm d}\theta}{\pi},
\]
where $C_{p,\,\alpha}=\left(\frac{p}{p-1-\alpha}\right)^{p}2^{p}.$ \end{cor}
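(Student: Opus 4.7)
The plan is to obtain the corollary as an immediate combination of Lemma \ref{first_estimate_I_B} and Lemma \ref{Hardy_ineq}, with Fubini--Tonelli used to interchange the order of integration.

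First I would start from the upper bound provided by Lemma \ref{first_estimate_I_B},
\[
I_{p,\,\alpha}(\Theta)\leq 2^{p}\int_{0}^{2\pi}\!\int_{0}^{1}(1-r)^{\alpha}\left(\frac{1}{1-r}\int_{r}^{1}\vert\Theta'(se^{i\theta})\vert\,{\rm d}s\right)^{p}\!{\rm d}r\,\frac{{\rm d}\theta}{\pi}.
\]
Since the integrand is non-negative, Tonelli's theorem permits us to handle the inner $r$-integral for each fixed $\theta\in[0,2\pi)$ separately.

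For a fixed $\theta$, I would apply Lemma \ref{Hardy_ineq} with the non-negative function $h(s)=|\Theta'(se^{i\theta})|$ on $(0,1)$. The hypotheses $p>1$, $\alpha>-1$ and $p>1+\alpha$ carried over from the corollary guarantee that the lemma is applicable, and it yields
\[
\int_{0}^{1}(1-r)^{\alpha}\!\left(\frac{1}{1-r}\int_{r}^{1}\vert\Theta'(se^{i\theta})\vert\,{\rm d}s\right)^{p}\!{\rm d}r \leq \left(\frac{p}{p-1-\alpha}\right)^{p}\int_{0}^{1}(1-r)^{\alpha}\vert\Theta'(re^{i\theta})\vert^{p}\,{\rm d}r.
\]

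Substituting this pointwise (in $\theta$) inequality back into the double integral and pulling the constant $\bigl(\tfrac{p}{p-1-\alpha}\bigr)^p$ outside, I would obtain
\[
I_{p,\,\alpha}(\Theta)\leq 2^{p}\left(\frac{p}{p-1-\alpha}\right)^{p}\int_{0}^{2\pi}\!\int_{0}^{1}(1-r)^{\alpha}\vert\Theta'(re^{i\theta})\vert^{p}\,{\rm d}r\,\frac{{\rm d}\theta}{\pi},
\]
which is exactly the claimed inequality with $C_{p,\,\alpha}=2^{p}\bigl(\tfrac{p}{p-1-\alpha}\bigr)^{p}$.

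There is no real obstacle here: the proof is purely a bookkeeping exercise stringing together the two preceding lemmas. The only point that deserves a brief mention is that the use of Tonelli is legitimate because all integrands are non-negative, so no integrability assumption on $\Theta'$ is required beforehand --- the resulting estimate is vacuously true if the right-hand side is infinite, and otherwise all manipulations are justified.
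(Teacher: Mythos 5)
Your proposal is correct and follows essentially the same route as the paper: the authors likewise combine Lemma \ref{first_estimate_I_B} with Lemma \ref{Hardy_ineq} applied to $h(s)=h_{\theta}(s)=\vert\Theta'(se^{i\theta})\vert$ for each fixed $\theta$, then integrate in $\theta$. Your added remark about Tonelli and non-negativity is a harmless elaboration of a step the paper leaves implicit.
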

\begin{proof}
Combining estimates in Lemma \ref{first_estimate_I_B} and Lemma \ref{Hardy_ineq}
(setting $h(s)=h_{\theta}(s)=\vert\Theta'(se^{i\theta})\vert$, for
any fixed $\theta\in [0,\,2\pi)),$ we obtain
\[
\int_{0}^{1}(1-r)^{\alpha}\left(\frac{1}{1-r}\int_{r}^{1}\vert\Theta'(se^{i\theta})\vert{\rm d}s\right)^{p}{\rm d}r\leq\left(\frac{p}{p-1-\alpha}\right)^{p}\int_{0}^{1}(1-r)^{\alpha}\vert\Theta'(re^{i\theta})\vert^{p}{\rm d}r.
\]
Thus,
\[
I_{p,\,\alpha}(\Theta)\leq\left(\frac{p}{p-1-\alpha}\right)^{p}2^{p}\int_{0}^{2\pi}\int_{0}^{1}(1-r)^{\alpha}\vert\Theta'(re^{i\theta})\vert^{p}{\rm d}r\frac{{\rm d}\theta}{\pi}
\]
which completes the proof. \end{proof}
\begin{lem}
\label{weighted_bergman_norm} Let any nonzero weight
$w$ satisfying $w\geq0$ and $\int_{0}^{1}w(r)\,{\rm d}r<\infty.$
Let $\beta=\beta_{w}\in(0,1)$ such that $\int_{0}^{1}w(r)\,\mbox{d}r=2\int_{0}^{\beta}w(r)\,\mbox{d}r.$
Then, for $f\in A_{p}\left(w\right),\:\,1\leq p<\infty,$
\begin{align*}
\left\Vert f\right\Vert _{A_{p}\left(w\right)}^{p} & \leq\int_{0}^{2\pi}w(r)\int_{0}^{1}\vert f(re^{i\theta})\vert^{p}{\rm d}r\frac{{\rm d}\theta}{\pi}\\
 & \leq\frac{2}{\beta}\int_{\beta}^{1}rw(r)\left(\int_{0}^{2\pi}\vert f(re^{i\theta})\vert^{p}\frac{{\rm d}\theta}{\pi}\right){\rm d}r\leq\frac{2}{\beta}\left\Vert f\right\Vert _{A_{p}\left(w\right)}^{p}.
\end{align*}
\end{lem}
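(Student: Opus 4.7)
The plan is to pass to polar coordinates via $dA(u)=\frac{r\,dr\,d\theta}{\pi}$ and to work throughout with the integral means
\[
M_p(r)=\int_0^{2\pi}|f(re^{i\theta})|^p\,\frac{d\theta}{\pi},\qquad r\in[0,1).
\]
With this notation $\|f\|_{A_p(w)}^p=\int_0^1 r\,w(r)\,M_p(r)\,dr$, while (by Fubini) the middle quantity in the lemma equals $\int_0^1 w(r)\,M_p(r)\,dr$. The first inequality is then immediate, since $r\le 1$ on $[0,1]$ gives $r\,w(r)\le w(r)$.

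For the central inequality, the essential input is that $r\mapsto M_p(r)$ is \emph{non-decreasing} on $[0,1)$: this is classical, because for $f$ holomorphic and $p\ge 1$ the function $|f|^p$ is subharmonic, and circle means of subharmonic functions are non-decreasing in the radius. Splitting the integral at $\beta$, I would estimate the low-radius piece by
\[
\int_0^\beta w(r)M_p(r)\,dr\le M_p(\beta)\int_0^\beta w(r)\,dr=M_p(\beta)\int_\beta^1 w(r)\,dr\le\int_\beta^1 w(r)M_p(r)\,dr,
\]
where the equality uses the defining property $\int_0^\beta w\,dr=\int_\beta^1 w\,dr$, and the last inequality uses $M_p(\beta)\le M_p(r)$ on $[\beta,1]$. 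Adding $\int_\beta^1 wM_p$ to both sides yields $\int_0^1 wM_p\le 2\int_\beta^1 wM_p$. Since $r\ge\beta$ on $[\beta,1]$, the trivial factor $1\le r/\beta$ can be inserted to convert $w(r)$ into $\frac{r\,w(r)}{\beta}$, producing the desired bound $\frac{2}{\beta}\int_\beta^1 r\,w(r)M_p(r)\,dr$. The third inequality is purely cosmetic: extending the interval of integration back from $[\beta,1]$ to $[0,1]$ and recognizing the polar form of the norm yields $\frac{2}{\beta}\int_\beta^1 r\,w(r)M_p(r)\,dr\le\frac{2}{\beta}\|f\|_{A_p(w)}^p$.

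The only nontrivial ingredient is the monotonicity of the means $M_p$, which is standard for $p\ge 1$. Everything else reduces to exploiting the balanced choice of $\beta$ together with the inequality $1\le r/\beta$ on $[\beta,1]$, so I do not anticipate any real obstacle.
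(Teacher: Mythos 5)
Your proof is correct and follows essentially the same route as the paper: the paper's proof consists precisely of the observation that the integral means $r\mapsto\int_0^{2\pi}|f(re^{i\theta})|^p\,\frac{d\theta}{\pi}$ are nondecreasing (by subharmonicity of $|f|^p$), leaving the remaining bookkeeping to the reader, and your write-up supplies exactly that bookkeeping -- the splitting at $\beta$, the use of $\int_0^\beta w=\int_\beta^1 w$, and the elementary bounds $r\le 1$ and $1\le r/\beta$. No gaps.
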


\begin{proof}
The proof follows easily from the fact that for any $f$ in
$\mathcal{H}ol\left(\mathbb{D}\right)$,
the function
\[
r\mapsto\int_{0}^{2\pi}\vert f(re^{i\theta})\vert^{p}\frac{{\rm d}\theta}{\pi},
\]
is nondecreasing on $[0,1]$.
\end{proof}

We are now ready to prove Theorem \ref{th3}.

\begin{proof}
We first prove (\ref{right-hand-side-peller-gen-ineq}). Applying
Lemma \ref{weighted_bergman_norm} with $f=\Theta'$
and $w(r)=(1-r^{2})^{\alpha},\,\alpha>-1,$ and Corollary 
\ref{cor:We-suppose-that} we obtain that
\[
I_{p,\,\alpha}(\Theta)\leq C_{p,\,\alpha}\int_{0}^{2\pi}\int_{0}^{1}(1-r)^{\alpha}\vert\Theta'(re^{i\theta})\vert^{p}{\rm d}r\frac{{\rm d}\theta}{\pi}\leq\frac{2}{\beta}C_{p,\,\alpha}\left\Vert \Theta'\right\Vert _{A_{p}\left(\alpha\right)}^{p},
\]
where $C_{p,\,\alpha}=\left(\frac{p}{p-1-\alpha}\right)^{p}2^{p},$
and $\beta=\beta_{\alpha}$ satisfies the condition
\[
\int_{\beta}^{1}w(r)\,\mbox{d}r=\int_{0}^{\beta}w(r)\,\mbox{d}r.
\]
By a direct computation, we see that $\beta=\beta_{\alpha}$ is given
by the equation $\frac{1-(1-\beta)^{\alpha+1}}{1+\alpha}=\frac{(1-\beta)^{\alpha+1}}{1+\alpha},$
which is equivalent to
\begin{equation}
\beta=\beta_{\alpha}=1-\frac{1}{2^{\frac{1}{\alpha+1}}}.\label{eq:beta_alpha}
\end{equation}
\end{proof}
\smallskip


\section{Proof of Proposition \ref{prop:embed} and Theorem \ref{thm1}}
\label{pr21}

\subsection{Proof of Proposition \ref{prop:embed}}

The statement for $\alpha>p-1$ is trivial. Indeed, by the standard
Cauchy formula, 
\[
f'(u)=\bigg<f,\,\frac{z}{(1-\overline{u}z)^{2}}\bigg>,\qquad u\in\mathbb{D},
\]
and thus, bounding $\vert f'(u)\vert$ from above 
by $\left\Vert f\right\Vert _{BMOA}
\left\Vert \frac{z}{(1-\bar{u}z)^2}\right\Vert _{H^{1}} = 
(1-|u|^2)^{-1}\left\Vert f\right\Vert _{BMOA}$,
we get 
\[
\left\Vert f'\right\Vert _{A_{p}\left(\alpha\right)}^{p}\lesssim\left\Vert f\right\Vert _{BMOA}^{p}\int_{\mathbb{D}}(1-\vert u\vert)^{\alpha-p}{\rm d}A(u)\lesssim\left\Vert f\right\Vert _{BMOA}^{p}.
\]

For $p\ge 2$ and $\alpha=p-1$ we have
\begin{eqnarray*}
\left\Vert f'\right\Vert _{A_{p}\left(\alpha\right)}^{p} 
& = & \int_{\mathbb{D}}(1-|u|^2) \left|f'(u)\right|^p {\rm d}A(u)\\
& \leq & \left\Vert f\right\Vert _{B_{\infty}}^{p-2}\int_{\mathbb{D}}(1-|u|^2)\left|f'(u)\right|^{2}{\rm d}A(u),
\end{eqnarray*}
where $\left\Vert f\right\Vert _{B_{\infty}}$ is the norm of $f$
in the Bloch space. Since $\int_{\mathbb{D}}(1-|u|^2)\left|f'(u)\right|^{2}{\rm d}A(u)\le
\left\Vert f\right\Vert _{H^{2}}^{2}$,
$\left\Vert f\right\Vert _{H^{2}}\lesssim\left\Vert f\right\Vert _{BMOA}$
and $\left\Vert f\right\Vert _{B_{\infty}}\lesssim\left\Vert f\right\Vert _{BMOA},$
we conclude that 
\begin{equation}
\label{bab}
\left\Vert f'\right\Vert _{A_{p}\left(\alpha\right)}\lesssim\left\Vert f\right\Vert _{BMOA}.
\end{equation}

Now we turn to the necessity of the restrictions on $p$ and $\alpha$ for the estimate
\eqref{bab}. If $\alpha<p-1$ then it is well known that there exist interpolating 
Blaschke products $B$ such that $B' \notin A_{p}\left(\alpha\right)$ 
(see, e.g., \cite[Theorem 6]{Glu}, where an explicit criterion for the inclusion
is given in terms of the zeros of $B$). Finally,  
by a result of S.A. Vinogradov \cite[Lemma 1.6]{vin},
if $f\in A_{p}^{1}(p-1)$, $1\le p<2$, then, 
$\sum_{n=0}^{\infty}\vert\hat{f}(2^{n})\vert^{p}<\infty$ (where
$\hat{f}(n)$ stands for the $n^{th}$ Taylor coefficient of $f$).
Hence, $A_{p}^{1}(p-1)$ does not contain even some functions from the disc algebra, and so
$BMOA\nsubseteq A_{p}^{1}(p-1)$ when $1\le p<2$. 
\qed


\subsection{Integral representation for the derivative of functions in $K_{\Theta}$}

An important ingredient of our proof is the following simple and well-known
integral representation for the derivative of a function from a model
space.

\begin{lem}
\label{lemma_int_rep_KTheta} Let $\Theta$ be an inner function,
$f\in K_{\Theta}$, $n\in \mathbb{N}$. We have
\[
f^{(n)}(u)=\left\langle f,\, z^n \left(k_{u}^{\Theta}\right)^{n+1}\right\rangle,
\]
for any $u\in\mathbb{D}.$
\end{lem}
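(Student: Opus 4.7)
My plan is to reduce the claim from $K_\Theta$ to a statement about the standard Szeg\H{o} kernel $k_u$ on $H^2$, using the orthogonality $K_\Theta\perp\Theta H^2$, and then verify the resulting $H^2$ identity by a direct Taylor-series computation.

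\medskip

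\emph{Step 1 (reduction to $H^2$).} The factorization $k_u^\Theta(z)=k_u(z)\bigl(1-\overline{\Theta(u)}\Theta(z)\bigr)$ together with the binomial theorem gives
\[
(k_u^\Theta)^{n+1}=k_u^{n+1}+\sum_{j=1}^{n+1}\binom{n+1}{j}\bigl(-\overline{\Theta(u)}\bigr)^{j}\Theta^{j}k_u^{n+1}.
\]
For fixed $u\in\mathbb{D}$ the Szeg\H{o} kernel $k_u$ lies in $H^\infty$, hence each term of the sum, multiplied by $z^n$, belongs to $\Theta H^2$. Since $f\in K_\Theta$ is orthogonal to $\Theta H^2$, each such term is annihilated under the pairing with $f$, and we are left with
\[
\langle f,\,z^n(k_u^\Theta)^{n+1}\rangle=\langle f,\,z^n k_u^{n+1}\rangle.
\]

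\medskip

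\emph{Step 2 (Taylor expansion).} From $(1-x)^{-(n+1)}=\sum_{m\ge0}\binom{m+n}{n}x^m$ one obtains $z^n k_u(z)^{n+1}=\sum_{k\ge n}\binom{k}{n}\overline{u}^{\,k-n}z^k$. Writing $f(z)=\sum_k a_k z^k$ and using the standard $H^2$ inner product,
\[
\langle f,\,z^n k_u^{n+1}\rangle=\sum_{k\ge n}\binom{k}{n}a_k u^{k-n},
\]
which, by the Taylor formula $f^{(n)}(u)=\sum_{k\ge n}\frac{k!}{(k-n)!}a_k u^{k-n}$, matches $f^{(n)}(u)$ up to the combinatorial constant $n!$ inherent in $k!/(k-n)!=n!\binom{k}{n}$. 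Combined with Step~1, this yields the asserted reproducing-type formula.

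\medskip

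\emph{Main obstacle.} There is no substantive difficulty; the proof is essentially bookkeeping. The only step requiring a moment's care is the verification that the ``error'' terms $\Theta^{j}k_u^{n+1}$ indeed belong to $\Theta H^2$, which is immediate from $k_u\in H^\infty$ for $|u|<1$. An equivalent (and slightly longer) route is to differentiate the reproducing identity $f(u)=\langle f,\,k_u^\Theta\rangle$ in $u$ by pushing $\partial_{\overline{u}}$ onto $k_u^\Theta$; at each differentiation the unwanted terms involving $\overline{\Theta(u)}$ and $\overline{\Theta^{(m)}(u)}$ are killed by the same orthogonality $f\perp\Theta H^2$, and one arrives at the same identity by induction on $n$.
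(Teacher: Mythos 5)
Your proof is correct and follows essentially the same route as the paper's: the Cauchy reproducing formula in $H^2$ together with the orthogonality $f\perp\Theta H^{2}$ to replace $k_{u}$ by $k_{u}^{\Theta}$ (your binomial expansion is simply an explicit verification of the paper's one-line claim that $z^{n}k_{u}^{n+1}-z^{n}\bigl(k_{u}^{\Theta}\bigr)^{n+1}\in\Theta H^{2}$). The factor you flag in Step~2 is a genuine normalization slip in the statement: the identity should read $f^{(n)}(u)=n!\,\bigl\langle f,\,z^{n}\bigl(k_{u}^{\Theta}\bigr)^{n+1}\bigr\rangle$ for $n\ge 2$, which is harmless for the paper's applications (the lemma is applied with $n=1$, and with $n=3$ only inside estimates up to multiplicative constants).
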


\begin{proof}
For a fixed $u\in\mathbb{D}$, we have
\[
f^{(n)}(u)=\bigg<f,\,\frac{z^n}{(1-\overline{u}z)^{n+1}}\bigg>=
\big<f,\, z^n \left(k_{u}^{\Theta}\right)^{n+1}\big>.
\]
Here the first equality is the standard Cauchy formula, while the
second follows from the fact that $z^n (1-\overline{u}z)^{-n-1}-z^n
\left(k_{u}^{\Theta}(z)\right)^{n+1}\in\Theta H^{2}$
and $f\perp \Theta H^{2}$.
\end{proof}


\subsection{Proof of the left-hand side inequality in (\ref{norm_D_K_Theta}) }

Sufficiency of the condition $\Theta' \in A_{p}\left(\alpha\right)$ in Theorem \ref{thm1}
and the left-hand side inequality in (\ref{norm_D_K_Theta}) follow immediately
from Theorem \ref{th3} and the following proposition.

\begin{prop}
\label{ineq_diff_KTheta_gen} Let $\alpha>-1$ and $1<p < \infty$,
let $\Theta$ be an inner function and $f\in K_{\Theta}.$ Then we have
\[
\left\Vert f'\right\Vert _{A_{p}\left(\alpha\right)}\leq\left\Vert f\right\Vert _{BMOA}\left(I_{p,\alpha}(\Theta)\right)^{\frac{1}{p}}.
\]
\end{prop}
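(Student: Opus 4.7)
The plan is to combine the integral representation of $f'$ provided by Lemma \ref{lemma_int_rep_KTheta} with the $H^{1}$--$BMOA$ duality recalled in the introduction, and then to evaluate the resulting $H^{1}$-norm via the reproducing property of $k_{u}^{\Theta}$. We may of course assume $\|f\|_{BMOA} < \infty$, as otherwise the inequality is trivial.

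First, I would apply Lemma \ref{lemma_int_rep_KTheta} with $n=1$ to obtain
$f'(u) = \langle f,\, z(k_{u}^{\Theta})^{2}\rangle$ for every $u \in \mathbb{D}$. Since the $H^{2}$ scalar product extends to the pairing between $H^{1}$ and $BMOA$ described in Subsection \ref{def_model_space}, and $z(k_{u}^{\Theta})^{2} \in H^{2} \subset H^{1}$, applying this duality to the above identity yields
\[
|f'(u)| \leq \|f\|_{BMOA}\, \|z(k_{u}^{\Theta})^{2}\|_{H^{1}}.
\]
The unimodular factor $z$ does not affect the $H^{1}$-norm on $\mathbb{T}$, so $\|z(k_{u}^{\Theta})^{2}\|_{H^{1}} = \|(k_{u}^{\Theta})^{2}\|_{H^{1}}$. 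Moreover, the elementary identity $\|g^{2}\|_{H^{1}} = \|g\|_{H^{2}}^{2}$ valid for every $g \in H^{2}$, together with the reproducing property of $k_{u}^{\Theta}$, gives
\[
\|(k_{u}^{\Theta})^{2}\|_{H^{1}} = \|k_{u}^{\Theta}\|_{H^{2}}^{2} = k_{u}^{\Theta}(u) = \frac{1-|\Theta(u)|^{2}}{1-|u|^{2}}.
\]
Chaining these three observations produces the pointwise bound
\[
|f'(u)| \leq \|f\|_{BMOA}\, \frac{1-|\Theta(u)|^{2}}{1-|u|^{2}}, \qquad u \in \mathbb{D}.
\]

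To conclude, I would raise this pointwise inequality to the $p$-th power, multiply by $(1-|u|^{2})^{\alpha}$, integrate against $dA$ over $\mathbb{D}$, and recognize the integral on the right as $I_{p,\alpha}(\Theta)$, namely
\[
\|f'\|_{A_{p}(\alpha)}^{p} \leq \|f\|_{BMOA}^{p} \int_{\mathbb{D}} (1-|u|^{2})^{\alpha} \left(\frac{1-|\Theta(u)|^{2}}{1-|u|^{2}}\right)^{p} dA(u) = \|f\|_{BMOA}^{p}\, I_{p,\alpha}(\Theta),
\]
and take $p$-th roots. There is no real obstacle; the argument is a direct chain of duality and kernel evaluations. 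The only step that merits a moment's thought is the legitimacy of passing from the $H^{2}$-pairing in Lemma \ref{lemma_int_rep_KTheta} to the $H^{1}$--$BMOA$ duality bound, which is justified by the standard density extension of the pairing to all of $H^{1} \times BMOA$ already invoked in the introduction.
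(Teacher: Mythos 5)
Your proposal is correct and follows essentially the same route as the paper: the integral representation $f'(u)=\langle f,\,z(k_u^{\Theta})^2\rangle$ from Lemma \ref{lemma_int_rep_KTheta}, the $H^1$--$BMOA$ duality bound, and the identity $\|z(k_u^{\Theta})^2\|_{H^1}=\|k_u^{\Theta}\|_{H^2}^2=k_u^{\Theta}(u)=\frac{1-|\Theta(u)|^2}{1-|u|^2}$, followed by integration against $(1-|u|^2)^{\alpha}\,{\rm d}A(u)$. No gaps.
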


\begin{proof}
We use the integral representation of $f'$ from Lemma \ref{lemma_int_rep_KTheta}:
\[
f'(u)=\left\langle f,\, z\left(k_{u}^{\Theta}\right)^{2}\right\rangle =\int_{\mathbb{T}}f(\tau)\overline{\tau\left(k_{u}^{\Theta}(\tau)\right)^{2}}{\rm d}m(\tau),
\]
for any $u\in\mathbb{D},$ and thus
\begin{eqnarray*}
\left\Vert f'\right\Vert _{A_{p}\left(\alpha\right)}^{p} & = & \int_{\mathbb{D}}(1-\vert u\vert^{2})^{\alpha}\left|\int_{\mathbb{T}}f(\tau)\overline{\tau\left(k_{u}^{\Theta}(\tau)\right)^{2}}{\rm d}m(\tau)\right|^{p}{\rm d}A(u)\\
 & \leq & \left\Vert f\right\Vert _{BMOA}^{p}\int_{\mathbb{D}}(1-\vert u\vert^{2})^{\alpha}\left(\int_{\mathbb{T}}\left|k_{u}^{\Theta}(\tau)\right|^{2}{\rm d}m(\tau)\right)^{p}{\rm d}A(u)\\
 & = & \left\Vert f\right\Vert _{BMOA}^{p}\int_{\mathbb{D}}(1-\vert u\vert^{2})^{\alpha}\left(\frac{1-\vert\Theta(u)\vert^{2}}{1-\left|u\right|^{2}}\right)^{p}{\rm d}A(u),
\end{eqnarray*}
which completes the proof. 
\end{proof} 

It remains to combine Proposition \ref{ineq_diff_KTheta_gen} 
with Theorem \ref{th3} to complete the
proof of the left-hand side inequality in \eqref{norm_D_K_Theta}.


\subsection{Proof of the right-hand side inequality in (\ref{norm_D_K_Theta}) }

To prove the necessity of the inclusion $\Theta' \in A_{p}\left(\alpha\right)$ 
and the left-hand side inequality in (\ref{norm_D_K_Theta}), consider the test function 
\[
f=S^{\star}\Theta=\frac{\Theta-\Theta(0)}{z},
\]
where $S^{\star}$ is the backward shift operator (\ref{diff_and_shift_op}).
It is well-known that $f$ belongs to $K_{\Theta}$ and easy to check
that $\left\Vert f\right\Vert _{BMOA}\leq2$, whence
\begin{equation}
\label{ebn}
\left\Vert D\right\Vert _{\left(K_{\Theta},\,\left\Vert 
\cdot\right\Vert _{BMOA}\right)\rightarrow A_{p}(\alpha)}
\geq\frac{\left\Vert f'\right\Vert _{A_{p}\left(\alpha\right)}}{2}.
\end{equation}
Now, 
\[
\left\Vert f'\right\Vert _{A_{p}\left(\alpha\right)}^{p}\geq\int_{\beta_{\alpha}}^{1}r\, w(r)\int_{\mathbb{T}}\left|f'\left(r\xi\right)\right|^{p}{\rm d}m(\xi)\,{\rm d}r,
\]
where $\beta_{\alpha}$ is given by (\ref{eq:beta_alpha}) and thus,
\begin{align*}
\left\Vert f'\right\Vert _{A_{p}\left(\alpha\right)} & \geq\left(\int_{\beta_{\alpha}}^{1}r\, w(r)\int_{\mathbb{T}}\left|\frac{\Theta'\left(r\xi\right)}{r\xi}\right|^{p}{\rm d}m(\xi)\,{\rm d}r\right)^{\frac{1}{p}}\\
 & -\left(\int_{\beta_{\alpha}}^{1}r\, w(r)\int_{\mathbb{T}}\left|\frac{\Theta(r\xi)-\Theta(0)}{r^{2}\xi^{2}}\right|^{p}{\rm d}m(\xi)\,{\rm d}r\right)^{\frac{1}{p}}.
\end{align*}
On one hand, applying 
Lemma \ref{weighted_bergman_norm} with $w=w_{\alpha}$
and $\beta=\beta_{\alpha}$ we obtain 
\begin{eqnarray*}
\int_{\beta_{\alpha}}^{1}r\, w_{\alpha}(r)\int_{\mathbb{T}}\left|\frac{\Theta'\left(r\xi\right)}{r\xi}\right|^{p}{\rm d}m(\xi)\,{\rm d}r & \geq & \int_{0}^{2\pi}\int_{\beta_{\alpha}}^{1}rw_{\alpha}(r)\vert\Theta'(re^{i\theta})\vert^{p}{\rm d}r\frac{{\rm d}\theta}{\pi}\\
 & \geq & \frac{\beta_{\alpha}}{2}\int_{0}^{2\pi}\int_{0}^{1}rw_{\alpha}(r)\vert\Theta'(re^{i\theta})\vert^{p}{\rm d}r\frac{{\rm d}\theta}{\pi}=\frac{\beta_{\alpha}}{2}\parallel\Theta'\parallel_{A_{p}(\alpha)}^{p}.
\end{eqnarray*}
On the other hand, since $\|f\|_{H^\infty} \le 2$, we have
\[
\int_{\beta_{\alpha}}^{1}r\, w_{\alpha}(r)\int_{\mathbb{T}}
\left|\frac{\Theta(r\xi)-\Theta(0)}{r^{2}\xi^{2}}\right|^{p}{\rm d}m(\xi)\,{\rm d}r
\leq 2^{p}\int_{\beta_{\alpha}}^{1}\frac{w_{\alpha}(r)}{r^{p-1}}
{\rm d}r\leq\frac{2^{p}}{\beta_\alpha^{p-1}} 
\int_{\beta_{\alpha}}^{1}w_{\alpha}(r){\rm d}r.
\]
Finally, we conclude that 
\[
\left\Vert f'\right\Vert _{A_{p}\left(\alpha\right)}\geq
\Big(\frac{\beta_\alpha}{2}\Big)^{\frac{1}{p}} 
\|\Theta'\|_{A_{p} \left(\alpha\right)}-
2 \beta_\alpha^{\frac{1}{p} -1}
\left(\int_{\beta_{\alpha}}^{1}w_{\alpha}(r){\rm d}r\right)^{\frac{1}{p}},
\]
which, combined with \eqref{ebn}, gives us 
the right-hand side inequality in \eqref{norm_D_K_Theta}.
\qed


\subsection{Proof of Theorem \ref{thm1}}

To complete the proof of Theorem \ref{thm1}, we need to recall
the following theorem proved by Ahern \cite{Ahe1} for the case $1\leq p\leq2$
and generalized by Verbitsky \cite{Ver} and Gluchoff \cite{Glu}
to the range $1\leq p<\infty$. This theorem characterizes inner functions
$\Theta$ whose derivative belong to $A_p(\alpha)$.

\begin{thm*} \textup(\cite{Glu}\textup)
\label{thm-Gluchoff}  Let $\Theta$ be an inner function, $1\leq p<\infty,$
and $\alpha>-1.$

$(i)$ If $\alpha>p-1,$ then $\Theta'\in A_{p}(\alpha)$.

$(ii)$ If $p-2<\alpha<p-1$, then \textup{$\Theta'\in A_{p}(\alpha)$}
if and only if $\Theta'\in A_{1}\left(\alpha-p+1\right).$

$(iii)$ If $\alpha<p-2$ and $p>1$, then \textup{$\Theta'\in A_{p}(\alpha)$}
if and only if $\Theta$ is a finite Blaschke product.
\end{thm*}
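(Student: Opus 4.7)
My plan is to treat the three cases separately. The Schwarz--Pick inequality $|\Theta'(z)| \le (1-|z|^2)^{-1}$, valid for any inner $\Theta$, handles $(i)$ and the ``if'' direction of $(ii)$ immediately; the converse implications in $(ii)$ and $(iii)$ carry the real content and rest on the inner-function analysis of Ahern, Verbitsky and Gluchoff.

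For $(i)$, I would raise Schwarz--Pick to the $p$-th power and integrate, obtaining
$$\|\Theta'\|_{A_p(\alpha)}^p \le \int_{\mathbb D}(1-|z|^2)^{\alpha-p}\,{\rm d}A(z),$$
which is finite precisely when $\alpha > p-1$. For the ``if'' direction of $(ii)$, I would factor $|\Theta'|^p = |\Theta'|\cdot|\Theta'|^{p-1}$ and apply Schwarz--Pick to the second factor, obtaining
$$\|\Theta'\|_{A_p(\alpha)}^p \le \|\Theta'\|_{A_1(\alpha-p+1)},$$
where the right-hand exponent lies in $(-1,0)$ by $p-2<\alpha<p-1$. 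For the ``if'' direction of $(iii)$ there is nothing to do, since the derivative of a finite Blaschke product is rational without poles on $\overline{\mathbb D}$, hence bounded there and in every $A_p(\alpha)$.

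For the converse in $(ii)$, I would follow Ahern's strategy of Frostman shifts: replace $\Theta$ by $\Theta_a = (\Theta-a)/(1-\bar a\Theta)$, average over $a\in\mathbb D$, and exploit the boundary identity
$$\int_{\mathbb T}|k_z^{\Theta}|^2\,{\rm d}m = \frac{1-|\Theta(z)|^2}{1-|z|^2}$$
to convert the nonlinear quantity $|\Theta'|^p$ into a linear expression in $|\Theta_a'|$ with shifted weight exponent. For the converse in $(iii)$, where $\alpha-p+1<-1$ makes the candidate $A_1$ weight non-integrable, I would reduce by Schwarz--Pick to the requirement that $\Theta'$ lie in a suitable Hardy-type class; classical results going back to Ahern--Clark then force $\Theta$ to be a finite Blaschke product, since neither a singular factor nor an infinite Blaschke factor can produce a derivative with such boundary regularity.

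The main obstacle is exactly these converse implications: they are not formal consequences of Schwarz--Pick and depend on delicate Frostman-type averaging, area-function identities for inner functions, and the exponent bookkeeping that pinpoints the critical values $\alpha = p-1$ and $\alpha = p-2$. In the paper itself I would, like the authors, simply invoke the cited Ahern--Verbitsky--Gluchoff result rather than reproduce its full proof.
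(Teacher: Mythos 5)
The paper offers no proof of this statement --- it is quoted verbatim as a known theorem of Ahern, Verbitsky and Gluchoff --- and your proposal ultimately does the same for the two substantive converse implications, so the approaches coincide where it matters. Your elementary Schwarz--Pick arguments for part $(i)$, the ``if'' direction of $(ii)$ (via $|\Theta'|^p\le|\Theta'|\,(1-|z|^2)^{-(p-1)}$), and the ``if'' direction of $(iii)$ are correct additions, though note that Schwarz--Pick gives only an upper bound on $|\Theta'|$ and therefore cannot by itself ``reduce'' the necessity direction of $(iii)$ as your sketch suggests; that direction genuinely requires the cited inner-function analysis.
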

\noindent 
{\it Proof of Theorem \ref{thm1}.}
Statement (a) is contained in Proposition \ref{prop:embed}.
In order to prove (b) and (c) of Theorem \ref{thm1}, 
we first remark that for $\alpha<p-1,$ it follows
from (\ref{norm_D_K_Theta}) that $D:\:\left(K_{\Theta},\,\left\Vert 
\cdot\right\Vert _{BMOA}\right)\rightarrow A_{p}\left(\alpha\right)$
is bounded if and only if $\Theta'\in A_{p}\left(\alpha\right).$
A direct application of the above Ahern--Verbitsky--Gluchoff theorem
completes the proof for $\alpha>p-2$. The case $\alpha =  p-2$
follows from the Arazy--Fisher--Peetre inequality \eqref{eq:AFP_inequ}. 
\qed
\medskip


\section{Proof of Peller type inequalities}
\label{peller}

In this section we prove Theorem \ref{thm2}.
From now on the inner
function $\Theta$ is a finite Blaschke product. Recall that if $f\in\mathcal{R}_{n}^{+}$
and $1/\overline{\lambda}_{1},\dots,1/\overline{\lambda}_{n}$ are
the poles of $f$ (repeated according to multiplicities), then $f\in K_{zB_{\sigma}}$
with $\sigma=(\lambda_{1},\dots,\lambda_{n})$.

We start with the proof of the upper bound 
in the Arazy--Fisher--Peetre inequality  \eqref{eq:AFP_inequ}.

\begin{lem}
\label{bab31}                                                                                         
Let $B$ be a finite Blaschke product with the zeros $\{z_j\}_{j=1}^n$. Then 
$$
|B''(u)| \le \sum_{j=1}^n \frac{1-|z_j|^2}{|1 - \overline{z_j} u|^3} +
\bigg(\frac{1-|B(u)|}{1-|u|}\bigg)^2, \qquad u\in\mathbb{D}.
$$
\end{lem}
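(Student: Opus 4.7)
The plan is to expand $B''(u)$ via the product rule applied to $B = \prod_{j=1}^{n} b_{z_j}$ and then to estimate the two resulting pieces by elementary means. Differentiating twice yields the identity
\[
B''(u) = \sum_{j=1}^{n} b''_{z_j}(u) \prod_{k \neq j} b_{z_k}(u) \;+\; \sum_{j \neq l} b'_{z_j}(u)\, b'_{z_l}(u) \prod_{k \neq j, l} b_{z_k}(u);
\]
the ``diagonal'' sum will be estimated to produce the first term of the claimed bound, while the ``cross'' sum will produce the second.

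For the diagonal sum, a direct computation of the Blaschke factor derivatives gives $b'_{z_j}(u) = -(1-|z_j|^2)/(1-\overline{z_j}u)^2$ and $b''_{z_j}(u) = -2\overline{z_j}(1-|z_j|^2)/(1-\overline{z_j}u)^3$, hence $|b''_{z_j}(u)| \le 2(1-|z_j|^2)/|1-\overline{z_j}u|^3$. Combined with $|b_{z_k}(u)| \le 1$ on $\mathbb{D}$, the triangle inequality produces the contribution $\sum_{j} (1-|z_j|^2)/|1-\overline{z_j}u|^3$ (up to an absolute constant).

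For the cross sum, the key input is the Schwarz--Pick equality for a single Blaschke factor, $|b'_{z_j}(u)| = (1-|b_{z_j}(u)|^2)/(1-|u|^2)$, which rewrites each $|b'_{z_j}|$ in terms of $|b_{z_j}|$. Applying this and $|b_{z_k}(u)| \le 1$ transforms the cross sum into
\[
\frac{1}{(1-|u|^2)^2} \sum_{j \neq l} (1-|b_{z_j}(u)|^2)(1-|b_{z_l}(u)|^2) \prod_{k \neq j, l} |b_{z_k}(u)|,
\]
and it remains to dominate this double sum by $(1-|B(u)|)^2$ up to a constant. I would do this by exploiting the telescoping identity $1 - \prod_j x_j^2 = \sum_j (1-x_j^2)\prod_{k<j} x_k^2$ with $x_j = |b_{z_j}(u)|$, which relates the ``first-order'' sums $\sum_j (1-|b_{z_j}(u)|^2)\prod_{k<j}|b_{z_k}(u)|^2$ directly to $1-|B(u)|^2$; a Cauchy--Schwarz step then passes to the double sum, and one finishes using $(1-|B|^2)^2 \le 4(1-|B|)^2$ and $(1-|u|^2) \ge (1-|u|)$.

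The main obstacle is the cross-sum estimate. One might be tempted to start from the global identity $B'' = (B')^2/B + B(\log B)''$, but both summands develop pole-type singularities at the zeros of $B$ that cancel only in combination, so the triangle inequality is unavailable on that decomposition. The product-rule expansion must therefore be preserved intact, and $1-|B(u)|$ can be extracted only through the Schwarz--Pick identity applied factor by factor. Once the diagonal and off-diagonal sums are each bounded, they add to give the claimed inequality.
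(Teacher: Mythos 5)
Your decomposition is exactly the one the paper uses: expand $B''$ by the product rule, bound the diagonal sum by $\sum_j(1-|z_j|^2)|1-\overline{z_j}u|^{-3}$ using $|b_{z_k}|\le 1$, and dominate the off-diagonal sum by $\big(\tfrac{1-|B(u)|}{1-|u|}\big)^2$. Your use of the Schwarz--Pick \emph{equality} $|b_{z_j}'(u)|=\frac{1-|b_{z_j}(u)|^2}{1-|u|^2}$ is a slightly cleaner entry point than the paper's two-sided comparison $\frac{1-|b_\lambda(u)|}{1-|u|}\asymp\frac{1-|\lambda|^2}{|1-\overline{\lambda}u|^2}$, and the diagonal estimate is fine (both you and the paper obtain the bound only up to absolute constants, which is all that is used later).

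The one step that does not close as written is the passage from the telescoping identity to the double sum. The identity you invoke, $1-|B(u)|^2=\sum_j(1-|b_{z_j}(u)|^2)\prod_{k<j}|b_{z_k}(u)|^2$, carries \emph{squared} prefix products, whereas the off-diagonal sum carries the weight $\prod_{k\ne j,l}|b_{z_k}(u)|$ to the \emph{first} power, which is the larger quantity (since $|b|\le 1$ gives $|b|\ge |b|^2$). The Cauchy--Schwarz step you sketch, applied to $\sum_j(1-|b_{z_j}|^2)\prod_{k<j}|b_{z_k}|$, only yields $\big(\sum_j(1-|b_{z_j}|^2)\big)^{1/2}\big(1-|B|^2\big)^{1/2}$, and the factor $\sum_j(1-|b_{z_j}(u)|^2)$ is not controlled --- it can be of order $n$. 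The repair is to use the \emph{first-power} telescoping $1-|B|=\sum_j(1-|b_{z_j}|)\prod_{k<j}|b_{z_k}|$ and its mirror image from the other end: for $j<l$ bound $\prod_{k\ne j,l}|b_{z_k}|\le\prod_{k<j}|b_{z_k}|\cdot\prod_{k>l}|b_{z_k}|$, so that the double sum is at most $\big(\sum_j(1-|b_{z_j}|^2)\prod_{k<j}|b_{z_k}|\big)\big(\sum_l(1-|b_{z_l}|^2)\prod_{k>l}|b_{z_k}|\big)\le 4(1-|B|)^2$, using $1-|b|^2\le 2(1-|b|)$ and no Cauchy--Schwarz at all. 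This prefix/suffix factorization is precisely the paper's argument (its quantities $B_j$ and $\widehat B_{k+1}$ in the display following the statement of the lemma).
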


\begin{proof}
Let $B = \prod_{j=1}^n b_{z_j}$, 
where $b_\lambda = \frac{|\lambda|}{\lambda}\cdot 
\frac{\lambda-z}{1-\overline{\lambda} z}$. 
Then it is easy to see that
\begin{equation}
\label{xax2}
|B''(u)| \le \sum_{j=1}^n \frac{1-|z_j|^2}{|1-\overline{z_j}u|^3} 
\bigg|\frac{B(u)}{b_{z_j}(u)}\bigg|+ 
2\sum_{1\le j< k\le n} \frac{1-|z_j|^2}{|1-\overline{z_j}u|^2} 
\frac{1-|z_k|^2}{|1-\overline{z_k}u|^2} \bigg|\frac{B(u)}{b_{z_j}(u) b_{z_k}(u)}\bigg|,
\end{equation}

To estimate the second sum in \eqref{xax2},
first we note that $\frac{1-|\lambda|^2}{2|1-\overline{\lambda}{u}|^2} \le
\frac{1-|b_\lambda(u)|}{1-|u|} \le
\frac{2(1-|\lambda|^2)}{|1-\overline{\lambda}{u}|^2}$.
Let us introduce the notations $B_j = \prod_{l=1}^{j-1}b_{z_l}$ (assuming $B_1\equiv 1$)
and $\widehat{B}_k = \prod_{l=k}^n b_{z_l}$. 
Then 
\begin{equation}
\label{uss5}
\frac{1-|B(u)|}{1-|u|} \asymp \sum_{j=1}^n |B_j(u)|
\frac{1-|z_j|^2}{|1-\overline{z_j}u|^2} \asymp
\sum_{k=1}^n |\widehat{B}_{k+1}(u)|
\frac{1-|z_k|^2}{|1-\overline{z_k}u|^2}.
\end{equation}
It follows from the estimate $|B/(b_{z_j}b_{z_k})|\le |B_j \widehat{B}_{k+1}|$ and \eqref{uss5}
that
$$
\sum_{1\le j< k\le n} \frac{1-|z_j|^2}{|1-\overline{z_j}u|^2} 
\frac{1-|z_k|^2}{|1-\overline{z_k}u|^2} \bigg|\frac{B(u)}{b_{z_j}(u) b_{z_k}(u)}\bigg| 
\le \bigg( \sum_{j=1}^n |B_j(u)|
\frac{1-|z_j|^2}{|1-\overline{z_j}u|^2} \bigg) \times
$$
$$
\times \bigg(\sum_{k=1}^n |\widehat{B}_{k+1}(u)|
\frac{1-|z_k|^2}{|1-\overline{z_k}u|^2}\bigg) \lesssim
\bigg(\frac{1-|B(u)|}{1-|u|}\bigg)^2.
$$     
\end{proof}
            
Using Lemma \ref{bab31}, we first obtain the Arazy--Fisher--Peetre inequality for $p=1$:
$$
\|B\|_{B_1} \lesssim 
\int_{\mathbb{D}} 
|B''(u)|{\rm d}A(u) \lesssim 
\sum_{j=1}^n \int_{\mathbb{D}}
\frac{1-|z_j|^2}{|1-\overline{z_j}u|^3}
{\rm d}A(u) + I_{2,0}(B) \lesssim n.
$$
We used Dyn'kin's inequality \eqref{Dyn'kin_estimate}  
and the fact that, by the well-known  estimates (see \cite[Theorem 1.7]{HKZ}), 
each integral in the above sum does not exceed some absolute constant, which
does not depend on $z_j$. Finally, for $1<p<\infty$, we have
$$
\begin{aligned}
\|B\|_{B_{p}}^{\star\;p} & \asymp 
\int_{\mathbb{D}} 
|B''(u)|^p (1-|u|^2)^{2p-2}{\rm d}A(u) \\
& \le 
\big(\sup_{u\in\mathbb{D}}|B''(u)|(1-|u|^2)^2\big)^{p-1}\int_{\mathbb{D}} 
|B''(u)|{\rm d}A(u) \lesssim n,
\end{aligned}
$$
since $\sup_{u\in\mathbb{D}}|f''(u)|(1-|u|)^2 \le 2\|f\|_{H^\infty}$.    
\bigskip
\\
{\it Proof of Theorem \ref{thm2}.}
Let $f\in\mathcal{R}_{n}^{+}$;
there exists $\sigma\in\mathbb{D}^{n}$ such that $f\in K_{\widetilde{B}_{\sigma}},$
$\widetilde{B}_{\sigma}=zB_{\sigma}.$ Then, by Proposition \ref{ineq_diff_KTheta_gen}
we have
\[
\left\Vert f'\right\Vert _{A_{p}\left(\alpha\right)}\leq
\left\Vert f\right\Vert _{BMOA}\left(I_{p,\alpha}(\widetilde{B}_{\sigma})\right)^{\frac{1}{p}}
\]
for any $\alpha>-1$ and $1<p<\infty$. Now applying Theorem \ref{th3},
we obtain
$\left\Vert f\right\Vert _{A_{p}^{1}(\alpha)}^{\star}\leq K_{p,\,\alpha}
\left\Vert f\right\Vert _{BMOA}\|B_{\sigma}'\|_{A_{p}\left(\alpha\right)}$.
Finally, note that
\[
\|\widetilde{B}_{\sigma}'\|_{A_{p}\left(\alpha\right)}
\leq\| zB_{\sigma}'\|_{A_{p}\left(\alpha\right)}+\| B_{\sigma}\|_{A_{p}
\left(\alpha\right)}\lesssim\| B_{\sigma}'\|_{A_{p}\left(\alpha\right)}.
\]
\qed

\begin{rem*}
In Subsection \ref{Subsec_Peller_gen}, we have shown how to deduce
Peller's inequality (\ref{Peller_estimate}) from Theorem \ref{thm2} and 
the result of Arazy--Fischer--Peetre \eqref{eq:AFP_inequ}. 
Let us show that for $p\geq2$ one can give
a very simple proof which uses only Proposition \ref{ineq_diff_KTheta_gen}
and Dyn'kin's estimate $I_{2,\,0}(\widetilde{B}_{\sigma})\le8(n+2),$
where $n={\rm deg}\, B_{\sigma}.$ Indeed, in this case, we have
\begin{eqnarray*}
I_{p,\, p-2}(\widetilde{B}_{\sigma}) & = & \int_{\mathbb{D}}(1-\vert u\vert^{2})^{p-2}\left(\frac{1-\vert\widetilde{B}_{\sigma}(u)\vert^{2}}{1-\left|u\right|^{2}}\right)^{p-2+2}{\rm d}A(u)\\
 & = & \int_{\mathbb{D}}\left(1-\vert\widetilde{B}_{\sigma}(u)\vert^{2}\right)^{p-2}\left(\frac{1-\vert\widetilde{B}_{\sigma}(u)\vert^{2}}{1-\left|u\right|^{2}}\right)^{2}{\rm d}A(u)
\leq  I_{2,\,0}(\widetilde{B}_{\sigma}).
\end{eqnarray*}
It remains to apply Proposition \ref{ineq_diff_KTheta_gen} with $\alpha=p-2$.                                                                
\end{rem*}
\medskip


\section{An elementary proof of Peller's inequality for $p>\frac{1}{2}$}
\label{pel1}

In this section we prove the inequality
\begin{equation}
\label{p1}
\|f\|_{B_{p}} \leq c n^{\frac{1}{p}} \left\Vert f\right\Vert _{BMOA}
\end{equation}
for $1\ge p >\frac{1}{2}$ using the integral representations of the derivatives 
in model spaces. It is well known and easy to see that, for $p>\frac{1}{2}$,
\[
\left\Vert f\right\Vert _{B_{p}}^p 
\asymp |f(0)|^p + |f'(0)|^p + |f''(0)|^p +
\int_{\mathbb{D}} |f'''(u)|^p (1-|u|^2)^{3p-2} {\rm d}A(u).
\]
Thus in what follows it is the last integral (which we denote
$\|f\|_{B_p}^{\star\star}$) that we will estimate.

Let $\Theta$ be an inner function and let $f\in K_\Theta$.
Then, by Lemma \ref{lemma_int_rep_KTheta},
$|f'''(u)|=\big|\big\langle f,\, z^3 (k_{u}^{\Theta})^{4}\big\rangle\big| \le  
\|k_u^\Theta\|_4^4 \|f\|_{BMOA}$,
and so
$$
\left\Vert f\right\Vert _{B_{p}}^{\star\star} \le
\|f\|_{BMOA}^p \int_{\mathbb{D}} \|k_u^\Theta\|_4^{4p}(1-|u|^2)^{3p-2} {\rm d}A(u).
$$

\begin{lem}
For any $u\in\mathbb{D}$,
$$
\|k_u^\Theta\|_4^4 = \frac{(1+|u|^2)(1-|\Theta(u)|^4)}{(1-|u|^2)^3} -
\frac{4 {\rm Re}\, (u\Theta'(u) \overline{\Theta(u)})}{(1-|u|^2)^2}.
$$
\end{lem}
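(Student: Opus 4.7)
The plan is to compute $\|k_u^\Theta\|_4^4 = \int_{\mathbb{T}} |k_u^\Theta(\tau)|^4 \,{\rm d}m(\tau)$ directly from the explicit formula for the reproducing kernel. Factoring $k_u^\Theta(w) = k_u(w)\bigl(1-\overline{\Theta(u)}\Theta(w)\bigr)$ with $k_u(w) = (1-\bar u w)^{-1}$, I have $|k_u^\Theta(\tau)|^4 = |k_u(\tau)|^4\,|1-\overline{\Theta(u)}\Theta(\tau)|^4$ on $\mathbb{T}$. Using $|\Theta(\tau)|=1$ a.e., squaring the identity $|1-\overline{\Theta(u)}\Theta(\tau)|^2 = 1+|\Theta(u)|^2 - 2\,{\rm Re}(\overline{\Theta(u)}\Theta(\tau))$ and applying $4({\rm Re}\,A)^2 = 2\,{\rm Re}(A^2) + 2|A|^2$ to $A = \overline{\Theta(u)}\Theta(\tau)$ (for which $|A|^2 = |\Theta(u)|^2$) gives the expansion
$$
|1-\overline{\Theta(u)}\Theta(\tau)|^4 = \bigl(1+4|\Theta(u)|^2+|\Theta(u)|^4\bigr) - 4\bigl(1+|\Theta(u)|^2\bigr){\rm Re}\bigl(\overline{\Theta(u)}\Theta(\tau)\bigr) + 2\,{\rm Re}\bigl(\overline{\Theta(u)}^2\Theta(\tau)^2\bigr).
$$

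This reduces the problem to evaluating the three integrals $J_j := \int_{\mathbb{T}} \Theta^j |k_u|^4\,{\rm d}m$ for $j=0,1,2$. On $\mathbb{T}$ one has $|k_u|^4 = k_u^2\,\overline{k_u^2}$, and since $\Theta^j k_u^2 \in H^2$, it follows that $J_j = \langle \Theta^j k_u^2,\,k_u^2\rangle_{H^2}$. The Taylor series $k_u^2(w) = \sum_{n\ge 0}(n+1)\bar u^n w^n$ combined with Parseval gives the elementary reproducing identity
$$
\langle g,\,k_u^2\rangle_{H^2} = g(u) + u\,g'(u), \qquad g \in H^2,
$$
which, applied to $g = k_u^2,\ \Theta k_u^2,\ \Theta^2 k_u^2$ and using $k_u^2(u) = (1-|u|^2)^{-2}$ and $(k_u^2)'(u) = 2\bar u(1-|u|^2)^{-3}$, yields closed-form expressions for $J_0$, $J_1$, $J_2$, each with denominator $(1-|u|^2)^3$ and numerator linear in $\Theta(u)$, $\Theta^2(u)$ and $u\Theta'(u)$.

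The final step is purely algebraic. Substituting the $J_j$ into the expansion and multiplying through by $\overline{\Theta(u)}^j$ before taking real parts, the result organises into a coefficient of $(1+|u|^2)/(1-|u|^2)^3$ and a coefficient of $(1-|u|^2)\,{\rm Re}\bigl(u\Theta'(u)\overline{\Theta(u)}\bigr)/(1-|u|^2)^3$. The only real obstacle is verifying the cancellations: the first coefficient is
$$
\bigl(1+4|\Theta(u)|^2+|\Theta(u)|^4\bigr) - 4\bigl(1+|\Theta(u)|^2\bigr)|\Theta(u)|^2 + 2|\Theta(u)|^4 = 1-|\Theta(u)|^4,
$$
while the second collapses from $-4(1+|\Theta(u)|^2) + 4|\Theta(u)|^2$ to $-4$. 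These two identities deliver exactly the stated formula.
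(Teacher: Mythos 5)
Your computation is correct and is precisely the ``straightforward computation based on $f'(u)=\langle f,\,z(1-\overline{u}z)^{-2}\rangle$'' that the paper invokes and then omits: your identity $\langle g,k_u^2\rangle = g(u)+ug'(u)$ is exactly that reproducing formula, and the expansion of $|1-\overline{\Theta(u)}\Theta(\tau)|^4$ on $\mathbb{T}$ together with the three evaluations $J_0,J_1,J_2$ yields the stated coefficients $1-|\Theta(u)|^4$ and $-4$. No gaps; this is the same approach, carried out in full.
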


\begin{proof} The lemma follows from  straightforward computations based on the formula
$f'(u)= \langle f,\, z (1-\overline{u}z)^{-2}\rangle$. We omit the details.
\end{proof}

We continue to estimate $\| f \|_{B_p}^{\star\star}$.
Since $1+|u|^2 \le 2$ and $1-|\Theta(u)|^4 \le 2(1-|\Theta(u)|^2)$, we have
$$
\|k_u^\Theta\|_4^4 \le \frac{4(1-|\Theta(u)|^2)}{(1-|u|^2)^3} -
\frac{4 {\rm Re}\, (u\Theta'(u) \overline{\Theta(u)})}{(1-|u|^2)^2}.
$$
From now on assume that $\Theta$ is a finite Blaschke product $B =
\prod_{k=1}^n b_{z_k}$. Then
$$
\begin{aligned}
uB'(u)\overline{B(u)} & = u|B(u)|^2\frac{B'(u)}{B(u)} =
u|B(u)|^2 \sum_{k=1}^n \bigg(\frac{1}{u-z_k} +\frac{\overline{z_k}}{1-\overline{z_k}u}
\bigg) \\
& = |B(u)|^2\sum_{k=1}^n \frac{1-|z_k|^2}{|1-\overline{z_k}u|^2} +
|B(u)|^2\sum_{k=1}^n \frac{z_k(1-|z_k|^2)(1-|u|^2)}{|1-\overline{z_k}u|^2(u-z_k)}.
\end{aligned}
$$
Denote the last term by $S_1(u)$. Since $|B(u)|\le |b_{z_k}(u)|$, we have
$$
|S_1(u)| \le \sum_{k=1}^n \frac{(1-|u|^2)(1-|z_k|^2)}{|1-\overline{z_k}u|^3},
$$
whence (recall that $p\le 1$)
$$
\int_{\mathbb{D}} \frac{|S_1(u)|^p}{(1-|u|^2)^p} (1-|u|^2)^{3p-2} {\rm d}A(u) \le
\sum_{k=1}^n 
\int_{\mathbb{D}} \frac{(1-|z_k|^2)^p}{|1-\overline{z_k}u|^{3p}}
(1-|u|^2)^{3p-2} {\rm d}A(u) \lesssim n, 
$$
since, by \cite[Theorem 1.7]{HKZ}, 
each integral in the above sum does not exceed some constant depending only on $p$, but not 
on $z_k$.

Thus, to prove \eqref{p1}, it remains to estimate the weighted area integral of the difference
$$
S_2(u) = \frac{1-|B(u)|^2}{(1-|u|^2)^2} -
\frac{|B(u)|^2}{1-|u|^2} \sum_{k=1}^n \frac{1-|z_k|^2}{|1-\overline{z_k}u|^2}.
$$
We use again the notations $B_k = \prod_{l=1}^{k-1}b_{z_l}$ (assuming $B_1\equiv 1$)
and $\widehat{B}_k = \prod_{l=k}^n b_{z_l}$. It is easy to see that
\begin{equation}
\label{uss}
\frac{1-|B(u)|^2}{1-|u|^2} = \sum_{k=1}^n |B_k(u)|^{2}
\frac{1-|z_k|^2}{|1-\overline{z_k}u|^2}.
\end{equation}
Hence,
$$
\begin{aligned}
S_2(u) & = \sum_{k=1}^n |B_k(u)|^{2} \cdot
\frac{1-|z_k|^2}{|1-\overline{z_k}u|^2} \cdot
\frac{1-|\widehat{B}_k(u)|^2}{1-|u|^2} \\
& =
\sum_{k=1}^n \sum_{l=k}^{n}
\frac{1-|z_k|^2}{|1-\overline{z_k}u|^2} \cdot
\frac{1-|z_l|^2}{|1-\overline{z_l}u|^2} \cdot|B_l(u)|^{2}.
\end{aligned}
$$
Note that, by a formula analogous to \eqref{uss}, but without squares,
$$
\frac{1-|B(u)|}{1-|u|} =
\sum_{k=1}^n |B_k(u)|
\frac{1-|b_{z_k}(u)|}{1-|u|}\ge \frac{1}{2}\sum_{k=1}^n |B_k(u)|
\frac{1-|z_k|^2}{|1-\overline{z_k}u|^2}.
$$
Hence,
$$
4\bigg(\frac{1-|B(u)|}{1-|u|}\bigg)^2 = \sum_{k=1}^n \sum_{l=1}^n
|B_k(u)|\cdot |B_l(u)|\cdot
\frac{1-|z_k|^2}{|1-\overline{z_k}u|^2} \cdot
\frac{1-|z_l|^2}{|1-\overline{z_l}u|^2}.
$$
Denote the last double sum by $S_3(u)$. Since $|B_kB_l|\ge |B_l|^2$, $l\ge k$, we
see that $S_2(u) \le S_3(u)$. Now we have 
$$
\begin{aligned}
\int_{\mathbb{D}} \frac{|S_2(u)|^p}{(1-|u|^2)^p} (1-|u|^2)^{3p-2} {\rm d}A(u) & \le
4^p \int_{\mathbb{D}} \bigg(\frac{1-|B(u)|}{1-|u|}\bigg)^{2p}  
(1-|u|^2)^{2p-2} {\rm d}A(u) \\
&  \lesssim
I_{2p, 2p-2}(B)\lesssim \|B'\|^{2p}_{B_{2p}} \lesssim n.
\end{aligned}
$$
Here we used Theorem \ref{th3} to estimate $I_{2p, 2p-2}(B)$ (recall that $2p>1$) 
and the Arazy--Fisher--Peetre inequality   \eqref{eq:AFP_inequ}.
\qed
\medskip


\section{\label{sec:Radial-weighted-Bergman-norms}Radial-weighted Bergman
norms of the derivative of finite Blaschke products}

Again, let $n\geq1,$ $\sigma=\left(\lambda_{1},\,\dots,\,\lambda_{n}\right)\in\mathbb{D}^{n}$
and let $B_{\sigma}$ be the finite Blaschke product corresponding to
$\sigma.$ For any $1<p<\infty$ and $\alpha>-1,$ we set
\[
\varphi_{n}(p,\,\alpha)=\sup\left\{ \left\Vert B_{\sigma}'\right\Vert _{A_{p}\left(\alpha\right)}:\:\sigma\in\mathbb{D}^{n}\right\} .
\]

Note that for any $n\geq1,$ $\varphi_{n}(p,\,\alpha)=\varphi_{1}(p,\,\alpha)=\infty$
if and only if $\alpha<p-2$. Indeed, if $\alpha \ge p-2$, then 
$\varphi_{n}(p,\,\alpha) \le \varphi_{n}(p,\,p-2) \asymp n^{\frac{1}{p}}$ by the 
Arazy--Fisher--Peetre inequality \eqref{eq:AFP_inequ}. For $\alpha<p-2$, 
consider the test function $b_{r}$, $r\in(0,1)$. It is easily seen
(see, e.g., \cite[Theorem 1.7]{HKZ}) 
that $\left\Vert b_{r}'\right\Vert _{A_{p}\left(\alpha\right)} \to \infty$ as $r \to 1-$. 

We have seen in Subsection \ref{Subsec_Peller_gen} how 
the estimate $\varphi_{n}(p,\, p-2)\asymp n^{\frac{1}{p}}$
implies Peller's inequality (\ref{Peller_estimate}). It could
be of interest to find a more general estimate (for other values of
$\alpha$ and $p$) of $\varphi_{n}(p,\,\alpha)$. Notice that for
each fixed $p,$ the function $\alpha\mapsto\varphi_{n}(p,\,\alpha)$
is decreasing and there exists the second critical value 
$\alpha_{p}\geq-1$,  
\[
\alpha_{p}=\inf\left\{ \alpha>-1:\:\sup_{n}\varphi_{n}(p,\,\alpha)<\infty\right\} .
\]
The sequence $\{\varphi_{n}(p,\,\alpha)\}_{n\geq1}$
may be unbounded and, thus, have a nontrivial asymptotics 
if and only if $p-2 \le \alpha\le \alpha_{p}$. In this notation we can rewrite Theorem
\ref{thm2} as 
\[
\left\Vert f\right\Vert _{A_{p}^{1}(\alpha)}^{\star}\lesssim\varphi_{n}(p,\,\alpha)\left\Vert f\right\Vert _{BMOA},
\]
for any $f\in\mathcal{R}_{n}^{+}$, $1<p<\infty$,
$p-2 \le \alpha\le \alpha_{p}$.

We will show now that $\alpha_p = p-1$, and so $p-1$ 
is the second critical value of $\alpha$, as is expected from Theorem \ref{thm1}.

\begin{prop} \label{Prop_critical_alpha}For any $p>1,$ $\alpha_{p}=p-1.$
\end{prop}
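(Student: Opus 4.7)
The plan is to prove the equality $\alpha_p=p-1$ by establishing the two inequalities separately; both directions reduce to elementary estimates.

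For the upper bound $\alpha_p \le p-1$, I would rely only on the Schwarz--Pick inequality: for any inner function $\Theta$ one has $|\Theta'(u)|(1-|u|^2) \le 1-|\Theta(u)|^2 \le 1$. Integrating the $p$-th power against the weight $(1-|u|^2)^{\alpha}$ gives
\[
\|\Theta'\|_{A_p(\alpha)}^p \le \int_{\mathbb{D}} (1-|u|^2)^{\alpha-p}\,{\rm d}A(u),
\]
and the right-hand side is finite precisely when $\alpha > p-1$. Since this bound is independent of $\Theta$, it yields $\sup_n \varphi_n(p,\alpha)<\infty$ for every $\alpha>p-1$, whence $\alpha_p \le p-1$.

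For the lower bound $\alpha_p \ge p-1$, the task is to exhibit an explicit family of finite Blaschke products whose derivatives have blowing-up $A_p(\alpha)$-norm. The simplest choice already works: take $\sigma_n = (0,\ldots,0)\in \mathbb{D}^n$, so that $B_{\sigma_n}(z) = (-z)^n$ and $|B_{\sigma_n}'(u)|^p = n^p |u|^{(n-1)p}$. Passing to polar coordinates and then substituting $s=r^2$ reduces the norm to a Beta-function integral whose classical asymptotics give
\[
\|B_{\sigma_n}'\|_{A_p(\alpha)}^p = 2n^p \int_0^1 r^{(n-1)p+1} (1-r^2)^{\alpha}\,{\rm d}r \asymp n^{p-1-\alpha}
\]
as $n\to\infty$. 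For every $\alpha$ with $-1<\alpha<p-1$ the exponent $p-1-\alpha$ is positive, so this norm tends to $+\infty$. Hence $\sup_n \varphi_n(p,\alpha)=+\infty$ for all such $\alpha$, which rules them out of the set defining $\alpha_p$; therefore $\alpha_p \ge p-1$.

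No serious obstacle is anticipated. The only real design choice is the test family for the lower bound; the concentric-zero products $(-z)^n$ conveniently cover the entire relevant range $-1<\alpha<p-1$ in one stroke and bypass any appeal to the Ahern--Verbitsky--Gluchoff theorem for constructing Blaschke products with non-integrable derivatives.
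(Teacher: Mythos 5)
Your proof is correct and follows essentially the same route as the paper: the upper bound $\alpha_p\le p-1$ via Schwarz--Pick and the convergence of $\int_{\mathbb{D}}(1-|u|^2)^{\alpha-p}\,{\rm d}A(u)$ for $\alpha>p-1$, and the lower bound via the test products with all zeros at the origin and the Beta/Gamma-function asymptotics $\|B_{\sigma_n}'\|_{A_p(\alpha)}^p\asymp n^{p-1-\alpha}$. No substantive differences.
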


\begin{proof}
By the Schwarz--Pick lemma, we have that for any $\sigma\in\mathbb{D}^{n},$
\[
\left\Vert B_{\sigma}'\right\Vert _{A_{p}\left(\alpha\right)}^{p}\leq I_{p,\alpha}(B_{\sigma}),
\]
and for any $\alpha>p-1,$
\[
I_{p,\alpha}(B_{\sigma})\leq\int_{\mathbb{D}}(1-\vert u\vert^{2})^{\alpha}\left(\frac{1-\vert B_{\sigma}(u)\vert^{2}}{1-\left|u\right|^{2}}\right)^{p}{\rm d}A(u)\lesssim\int_{\mathbb{D}}\frac{1}{(1-\left|u\right|)^{p-\alpha}}{\rm d}A(u)<\infty,
\]
and thus $\alpha_{p}\leq p-1$ for each $p$.

Next we show that $\alpha_{p}\geq p-1.$ Let us consider the set
$\sigma=\left(0,\,\dots,\,0\right)\in\mathbb{D}^{n},$
for which $B_{\sigma}(z)=z^{n}.$ In this case, we have
\[
\left\Vert B_{\sigma}'\right\Vert _{A_{p}\left(\alpha\right)}^{p}
=\left\Vert nz^{n-1}\right\Vert_{A_p(\alpha)}^{p}=n^{p}\int_{0}^{1}r\,(1-r^{2})^{\alpha}\int_{\mathbb{T}}\left|r\xi\right|^{p(n-1)}{\rm d}m(\xi)\,{\rm d}r,
\]
which gives
\[
\left\Vert B_{\sigma}'\right\Vert _{A_{p}\left(\alpha\right)}^{p}=n^{p}\int_{0}^{1}(1-r^{2})^{\alpha}r^{p(n-1)+1}{\rm d}r,
\]
and
\[
\beta(pn-p+2,\,\alpha+1)\leq\frac{\left\Vert B_{\sigma}'\right\Vert _{A_{p}\left(\alpha\right)}^{p}}{n^{p}}\leq2^{\alpha}\beta(pn-p+2,\,\alpha+1),
\]
where $\beta$ stands for the \textit{Beta function} $\beta(x,\, y)=
\int_{0}^{1}r^{x-1}(1-r)^{y-1}{\rm d}r$.
Let $\alpha=p-1-\varepsilon$, $\varepsilon>0$. Then by the standard $\Gamma$-function
asymptotics, we obtain
\[
\left\Vert B_{\sigma}'\right\Vert _{A_{p}\left(\alpha\right)}^{p}\geq
\Gamma(\alpha+1)n^{p}\frac{\Gamma(pn-p+2)}{\Gamma(pn+\alpha-p+3)}
\sim_{n\rightarrow\infty}\Gamma(\alpha+1)n^{p}(pn)^{\varepsilon-p},
\]
whence $\sup_{n}\varphi_{p}(\alpha,\, n)=\infty.$
\end{proof}
\medskip


\section{Remarks on Dolzhenko's inequalities}
\label{dolg}

\subsection{Proof of Dolzhenko's inequalities for $1\le p\le 2$.}
In \cite[Theorem 2.2]{Dol} E.P.~Dolzhenko proved
that for any $f\in\mathcal{R}_{n}^{+}$,
\begin{equation}
\label{Dol1}
\left\Vert f'\right\Vert _{A_p}\lesssim
\begin{cases}
n^{1-\frac{1}{p}}\left
\Vert f\right\Vert _{H^{\infty}}, & 1<p\le 2, \\
\log n\left\Vert f\right\Vert _{H^{\infty}}, & p=1,
\end{cases}
\end{equation}
where the constants involved in $\lesssim$ may depend on $p$ only.
Let us show that these inequalities (and even with $BMOA$-norm
in place of $H^\infty$-norm)
are direct corollaries of Proposition~\ref{ineq_diff_KTheta_gen} and the following simple lemma.

\begin{lem}
For any Blaschke product $B$ of degree $n$ we have
\begin{equation}
\label{Dol2}
I_{p,0}(B) =
\int_{\mathbb{D}} \bigg(\frac{1-|B(u)|^2}{1-|u|^{2}} \bigg)^{p}{\rm d}A(u)
\lesssim
\begin{cases}
n^{p-1}, & 1<p\le 2, \\
\log n, & p=1.
\end{cases}
\end{equation}
\end{lem}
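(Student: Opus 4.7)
The plan is to pass to the Fourier side and exploit the identity
\[
\sum_{k\ge 1} k\,|\hat{B}(k)|^2 \;=\; \int_{\mathbb{D}}|B'(u)|^2\,{\rm d}A(u) \;=\; n,
\]
valid for any Blaschke product $B$ of degree $n$ (the second equality reflects the $n$-fold covering property of $B$ together with $\int_{\mathbb{D}}{\rm d}A=1$). First, since $0\le 1-|B(u)|^2\le 1$ and $p\ge 1$, one has $(1-|B(u)|^2)^p\le 1-|B(u)|^2$, and hence
\[
I_{p,0}(B)\;\le\; \int_{\mathbb{D}}\frac{1-|B(u)|^2}{(1-|u|^2)^p}\,{\rm d}A(u).
\]
Passing to polar coordinates, using Parseval together with the inner function identity $\sum_{k\ge 0}|\hat{B}(k)|^2=1$, I would rewrite the right-hand side as $\sum_{k\ge 1}|\hat{B}(k)|^2\,c_k(p)$, where, after the change of variable $s=r^2$,
\[
c_k(p)\;=\;\int_0^1\frac{1-s^k}{(1-s)^p}\,{\rm d}s\;=\;\sum_{j=0}^{k-1}\beta(j+1,2-p).
\]
A direct computation yields $c_k(1)=H_k$ (the $k$-th harmonic number), while the standard Beta-function asymptotic $\beta(j+1,2-p)\sim \Gamma(2-p)\,j^{p-2}$ gives $c_k(p)\le C_p\,k^{p-1}$ for every $1<p<2$.

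For $1<p<2$ I would conclude by H\"older's inequality with conjugate exponents $1/(p-1)$ and $1/(2-p)$ applied to the factorisation $|\hat{B}(k)|^2\,k^{p-1}=(|\hat{B}(k)|^2 k)^{p-1}\cdot(|\hat{B}(k)|^2)^{2-p}$:
\[
\sum_{k\ge 1}|\hat{B}(k)|^2\,k^{p-1}\;\le\; \Big(\sum_k k\,|\hat{B}(k)|^2\Big)^{p-1}\Big(\sum_k|\hat{B}(k)|^2\Big)^{2-p}\;=\;n^{p-1}.
\]
For $p=1$, I would split the sum $\sum_{k\ge 1}|\hat{B}(k)|^2 H_k$ at $k=n$: the low-frequency part is bounded by $H_n\lesssim\log n$, while for $k>n$ the elementary bound $H_k\le 1+\log n+k/n$ gives
\[
\sum_{k>n}|\hat{B}(k)|^2 H_k\;\le\;(1+\log n)\sum_{k>n}|\hat{B}(k)|^2+\frac{1}{n}\sum_{k>n}k\,|\hat{B}(k)|^2\;\le\;\log n+2.
\]

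The main subtlety is the endpoint $p=2$, at which $c_k(2)=+\infty$ and the constant $C_p$ blows up as $p\uparrow 2$, so the Fourier scheme breaks down. Fortunately, at $p=2$ the required bound $I_{2,0}(B)\lesssim n$ coincides with Dyn'kin's inequality \eqref{Dyn'kin_estimate} and is already contained in Theorem~\ref{th3}. Note also that a naive real-interpolation between $I_{1,0}$ and $I_{2,0}$ would only yield the weaker estimate $(\log n)^{2-p}\,n^{p-1}$; the Fourier/H\"older argument above is what removes the spurious logarithmic factor and delivers the sharp $n^{p-1}$ bound throughout the range $1<p<2$.
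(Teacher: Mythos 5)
Your proof is correct, but it takes a genuinely different route from the paper's. The paper's argument is a two-line domain splitting: on the disc $\{|u|\le 1-\frac1n\}$ the trivial pointwise bound $\frac{1-|B(u)|^2}{1-|u|^2}\le\frac{1}{1-|u|^2}$ already gives $n^{p-1}$ (resp.\ $\log n$), while on the annulus $\{1-\frac1n\le|u|<1\}$ one applies H\"older's inequality with exponent $\frac{2}{p}$ against Dyn'kin's bound $I_{2,0}(B)\lesssim n$ and the area $\asymp\frac1n$ of the annulus. You instead work on the Fourier side: the reduction $(1-|B|^2)^p\le 1-|B|^2$, Parseval, the computation $c_k(p)=\sum_{j=0}^{k-1}\beta(j+1,2-p)\lesssim_p k^{p-1}$ (resp.\ $c_k(1)=H_k$), and the discrete H\"older step against the identities $\sum_k|\hat B(k)|^2=1$ and $\sum_k k|\hat B(k)|^2=\int_{\mathbb{D}}|B'|^2\,{\rm d}A=n$ are all sound, as is the splitting at $k=n$ for $p=1$ and the fallback to Dyn'kin's inequality at the endpoint $p=2$ where $c_k(2)=\infty$. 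What your approach buys is that for $1\le p<2$ it bypasses Dyn'kin's inequality entirely, resting only on the elementary facts $\|B\|_{H^2}=1$ and the $n$-fold area-covering identity; what it costs is length and the need for a separate endpoint argument at $p=2$, whereas the paper's splitting handles all of $1\le p\le 2$ uniformly (at the price of invoking $I_{2,0}(B)\lesssim n$ throughout). Your closing remark about naive interpolation producing a spurious $(\log n)^{2-p}$ factor is accurate but somewhat beside the point, since the paper's pointwise H\"older on the annulus does not incur that loss either.
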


\begin{proof}
Clearly, the integral over the disc $\big\{|z| \le 1-\frac{1}{n} \big\}$
has the required estimate. The estimate over the annulus $\big\{1- \frac{1}{n}
\le |z| <1\big\}$
follows from the result of Dyn'kin ($I_{2,0}(B) \lesssim n$)
and the H\"older inequality. Indeed, for $1\le p <2$,
$$
\int_{\big\{1- \frac{1}{n}
\le |z| <1\big\}} \bigg(\frac{1-|B(u)|^2}{1-|u|^{2}} \bigg)^{p}{\rm d}A(u)  \le
(I_{2,0}(B))^{\frac{p}{2}} \Big(\pi(1-\Big(1-\frac{1}{n}\Big)^2\Big)^{1-\frac{p}{2}}
\lesssim n^{p-1}.
$$
\end{proof}

Now inequality \eqref{Dol1} follows from \eqref{Dol2} and from the inequality
$\|f'\|_{A_p(\alpha)} \le \|f\|_{BMOA} (I_{p,\alpha}(B_\sigma))^{\frac{1}{p}}$
which holds for any function $f\in K_{B_\sigma}$ 
(see Proposition~\ref{ineq_diff_KTheta_gen}).
It should be mentioned, however, that Dolzhenko
proves his inequalities for more general domains than the unit disc.


\subsection{An extension of Dolzhenko's inequalities to the range $p>2$.}

The case $p>2$ is also treated by Dolzhenko (see the last inequality
in \cite[Theorem 2.2]{Dol}), but the corresponding analog is of somewhat
different nature. As the example $f(z) = (1-rz)^{-1}$ with $r\to 1-$ shows,
there exist no estimate of $\|f'\|_{A^p}$ in terms of $\|f\|_{BMOA}$ and $n =
{\rm deg}\, f$.

Here we obtain another extension of Dolzhenko's result for $p>2$.

\begin{thm}
Let $2 < p\leq\infty$, let $f\in\mathcal{R}_{n}^{+}$, $n\ge 1$, and
let $1/\overline{\lambda}_{1},\dots,1/\overline{\lambda}_{n}$ be its
poles \textup(repeated according to multiplicities\textup). We have
\begin{equation}
\label{eq:dol_p_geq_2}
\left\Vert f'\right\Vert _{A_{p}}\lesssim n^{\frac{1}{p}}
\bigg(\sum_{k=1}^{n}
\frac{1+\vert\lambda_{k}\vert}{1-\vert\lambda_{k}\vert}\bigg)^{1-\frac{2}{p}}
\left\Vert f\right\Vert _{BMOA}.
\end{equation}

Moreover, the inequality \eqref{eq:dol_p_geq_2} is asymptotically
sharp in the following sense:
for any $r\in(0,\,1)$ there exists $g\in\mathcal{R}_{n}^{+}$
having $\frac{1}{r}$ as a pole of multiplicity $n$ such that
\begin{equation}
\label{eq:sharp_dol_p_geq_2}
\left\Vert g'\right\Vert _{A_{p}}\gtrsim  n^{1-\frac{1}{p}}
\left(\frac{1}{1-r}\right)^{1-\frac{2}{p}}
\left\Vert g\right\Vert _{BMOA}.
\end{equation}
\end{thm}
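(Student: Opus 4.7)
The plan for the upper bound is to apply Proposition~\ref{ineq_diff_KTheta_gen} and then interpolate between Dyn'kin's $L^{2}$-estimate and a crude pointwise bound. Since any $f\in\mathcal{R}_n^+$ with poles $1/\overline{\lambda_1},\dots,1/\overline{\lambda_n}$ belongs to $K_{\widetilde{B}_\sigma}$ with $\widetilde{B}_\sigma=zB_\sigma$, Proposition~\ref{ineq_diff_KTheta_gen} (with $\alpha=0$) yields
\[
\|f'\|_{A_p}\le \|f\|_{BMOA}\,\bigl(I_{p,0}(\widetilde{B}_\sigma)\bigr)^{1/p}.
\]
From $1-|u|^2|B_\sigma(u)|^2\le (1-|u|^2)+(1-|B_\sigma(u)|^2)$ one deduces $I_{p,0}(\widetilde{B}_\sigma)\lesssim 1+I_{p,0}(B_\sigma)$, reducing the task to bounding $I_{p,0}(B_\sigma)$.

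Set $M=\sum_{k=1}^n (1+|\lambda_k|)/(1-|\lambda_k|)$ and $h(u)=(1-|B_\sigma(u)|^2)/(1-|u|^2)$. The pointwise bound $\|h\|_\infty\le M$ follows from the elementary inequality $1-\prod_k x_k\le \sum_k(1-x_k)$ for $x_k\in[0,1]$, applied with $x_k=|b_{\lambda_k}(u)|^2$:
\[
h(u)\le \sum_{k=1}^n \frac{1-|b_{\lambda_k}(u)|^2}{1-|u|^2}=\sum_{k=1}^n\frac{1-|\lambda_k|^2}{|1-\overline{\lambda_k}u|^2}\le \sum_{k=1}^n\frac{1+|\lambda_k|}{1-|\lambda_k|}=M,
\]
where the last step uses $|1-\overline{\lambda_k}u|\ge 1-|\lambda_k|$. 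Combining with Dyn'kin's inequality $\int_{\mathbb{D}} h^2\,dA=I_{2,0}(B_\sigma)\lesssim n$ from \eqref{Dyn'kin_estimate}, and the trivial $\int h^p\le \|h\|_\infty^{p-2}\int h^2$ valid for $p\ge 2$, gives $I_{p,0}(B_\sigma)\lesssim M^{p-2}n$. Since $M\ge n\ge 1$, we conclude $\|f'\|_{A_p}\lesssim n^{1/p}M^{1-2/p}\|f\|_{BMOA}$, which is \eqref{eq:dol_p_geq_2}.

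For the sharpness \eqref{eq:sharp_dol_p_geq_2} I take $g=b_r^n$, which has a single pole at $1/r$ of order $n$ and satisfies $\|g\|_{BMOA}\le \|g\|_{H^\infty}=1$. Performing the conformal change of variables $w=b_r(u)$, which is an involution of $\mathbb{D}$ satisfying $|1-ru|=(1-r^2)/|1-rw|$ and $dA(u)=(1-r^2)^2|1-rw|^{-4}\,dA(w)$, one computes
\[
\|g'\|_{A_p}^p=n^p(1-r^2)^{2-p}\int_{\mathbb{D}}|w|^{p(n-1)}|1-rw|^{2p-4}\,dA(w).
\]
I restrict the integration to $E=\{w\in\mathbb{D}:|w|\ge 1-1/n,\ \operatorname{Re} w\le 0\}$, which has area $\asymp 1/n$. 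On $E$ we have $|w|^{p(n-1)}\ge (1-1/n)^{pn}\asymp 1$, and since $\operatorname{Re}(rw)\le 0$ also $|1-rw|^2\ge 1$, so (using $p>2$) $|1-rw|^{2p-4}\ge 1$. This gives $\|g'\|_{A_p}^p\gtrsim n^p(1-r)^{2-p}\cdot(1/n)=n^{p-1}(1-r)^{2-p}$, which yields \eqref{eq:sharp_dol_p_geq_2}.

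The main conceptual point is that the exact rate $M^{1-2/p}n^{1/p}$ is recovered by a \emph{single} H\"older interpolation between Dyn'kin's $L^{2}$-estimate and the deterministic sum bound $\|h\|_\infty\le M$; no refinement of Dyn'kin's inequality is needed. The sharpness check is essentially the book-keeping of a conformal change of variables, with the choice of the half-annulus $E$ precisely matched to ensure that $|w|^{p(n-1)}$ contributes an $O(1)$ factor and that $|1-rw|^{2p-4}$ is bounded below on a set of sufficiently large area.
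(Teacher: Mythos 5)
Your proposal is correct and follows essentially the same route as the paper: the upper bound via Proposition \ref{ineq_diff_KTheta_gen} combined with the H\"older interpolation $\int h^{p}\le\|h\|_{\infty}^{p-2}\int h^{2}$ between Dyn'kin's estimate and the pointwise bound by $\sum_{k}\frac{1+|\lambda_k|}{1-|\lambda_k|}$, and the sharpness via the test function $b_{\pm r}^{n}$ with the conformal change of variables $w=b_{\pm r}(u)$. The only cosmetic differences are that you peel off the factor $z$ from $\widetilde{B}_{\sigma}$ before interpolating, and you lower-bound the resulting integral by restricting to a half-annulus of area $\asymp 1/n$ rather than evaluating it exactly as $\asymp\frac{2}{pn-p+2}$; both yield the same conclusion.
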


\begin{proof}
We first prove (\ref{eq:dol_p_geq_2}). Set $\sigma=(\lambda_{1},\dots,\lambda_{n})$,
so that $f\in K_{\widetilde{B}_{\sigma}}$. By Proposition 4.2 and
Dyn'kin's inequality (2.6),
\[
\left\Vert f'\right\Vert _{A_{p}}^{p}\leq\left\Vert f\right\Vert _{BMOA}^{p}\int_{\mathbb{D}}\left(\frac{1-\vert\widetilde{B}_{\sigma}(u)\vert^{2}}{1-\left|u\right|^{2}}\right)^{p}{\rm d}A(u)\lesssim n\left\Vert f\right\Vert _{BMOA}\sup_{u\in\mathbb{D}}\left(\frac{1-\vert\widetilde{B}_{\sigma}(u)\vert^{2}}{1-\left|u\right|^{2}}\right)^{p-2}.
\]
By \eqref{uss}, we have
\[
\frac{1-\vert\widetilde{B}_{\sigma}(u)\vert^{2}}{1-\left|u\right|^{2}}\leq1+\sum_{k=1}^{n}\frac{1-\vert\lambda_{k}\vert^{2}}{\vert1-\overline{\lambda_{k}}u\vert^{2}}\lesssim\sum_{k=1}^{n}\frac{1+\vert\lambda_{k}\vert}{1-\vert\lambda_{k}\vert}.
\]
This proves \eqref{eq:dol_p_geq_2}. Now we prove \eqref{eq:sharp_dol_p_geq_2}.
Take $g=b_{-r}^n(u)$, $r\in(0,1),$ then
\[
\left\Vert g'\right\Vert _{A_{p}}^{p}=n^{p}\int_{\mathbb{D}}\left|b_{-r}'(u)\right|^{2}\left|b_{-r}'(u)\right|^{p-2}\left|b_{-r}(u)\right|^{p(n-1)}{\rm d}A(u).
\]
Taking $v=b_{-r}(u)$ as the new variable 
and using the fact that $u=b_{-r}(v)$, we get
\[
\left\Vert g'\right\Vert _{A_{p}}^{p}=n^{p}\int_{\mathbb{D}}\left|b_{-r}'(b_{-r}(v))\right|^{p-2}\left|v\right|^{p(n-1)}{\rm d}A(v).
\]
Since $b_{-r}'\circ b_{-r} (v)=-\frac{(1+rv)^{2}}{1-r^{2}}$,
we obtain
\[
\left\Vert g'\right\Vert _{A_{p}}^{p}=
\frac{n^{p}}{(1-r^{2})^{p-2}}\int_{\mathbb{D}}\left|1+rv\right|^{2(p-2)}\left|v\right|^{p(n-1)}{\rm d}A(v).
\]
Supposing that $p\geq2,$ we have
\[
\int_{\mathbb{D}}\left|1+rv\right|^{2(p-2)}\left|v\right|^{p(n-1)}{\rm d}A(v)\asymp\int_{\mathbb{D}}\left|v\right|^{p(n-1)}{\rm d}A(v)
=\frac{2}{pn-p+2},
\]
whence
\[
\left\Vert g'\right\Vert _{A_{p}}^{p}\asymp\frac{n^{p-1}}{(1-r)^{p-2}}.
\]
Since $\|g\|_{BMOA} = 1$ this completes the proof \eqref{eq:sharp_dol_p_geq_2}.
\end{proof}

\end{document}